\def\no{\noindent}
\def\pmatrix{\left(\begin{array}}
\def\endpmatrix{\end{array}\right)}
\def\U{{\cal U}}
\newtheorem{theo}{Theorem}
\newtheorem{lem}{Lemma}
\newtheorem{rem}{Remark}
\newtheorem{defi}{Definition}
\newtheorem{prop}{Proposition}
\newtheorem{assum}{Assumption}
\newcommand{\norm}[1]{\left\Vert#1\right\Vert}
\title{Exponential collocation methods for  \\the cubic Schr\"{o}dinger equation}
\author{Bin Wang\,
\footnote{School of Mathematical Sciences, Qufu Normal University,
Qufu 273165, P.R. China; Mathematisches Institut, University of
T\"{u}bingen, Auf der Morgenstelle 10, 72076 T\"{u}bingen, Germany.
The research is supported in part by the Alexander von Humboldt
Foundation and by the Natural Science Foundation of Shandong
Province (Outstanding Youth Foundation) under Grant ZR2017JL003.
E-mail:~{\tt wang@na.uni-tuebingen.de} } \and Xinyuan
Wu\thanks{School of Mathematical Sciences, Qufu Normal University,
Qufu 273165, P.R. China; Department of Mathematics, Nanjing
University, Nanjing 210093, P.R. China. The research is supported in
part by the National Natural Science Foundation of China under Grant
11671200. E-mail:~{\tt xywu@nju.edu.cn}} }
\begin{document}
\maketitle

\begin{abstract} In this paper we derive and analyse new exponential collocation
methods to efficiently solve the cubic Schr\"{o}dinger Cauchy
problem on a $d$-dimensional torus. Energy preservation is a key
feature of the cubic Schr\"{o}dinger equation. It is proved that the
novel methods can be of arbitrarily high order which exactly or
nearly preserve the continuous energy of the original continuous
system. The existence and uniqueness, regularity, global
convergence, nonlinear stability of the  new methods are studied in
detail. Two practical exponential collocation methods are
constructed and two numerical experiments are included. The
numerical results illustrate the efficiency of the new methods in
comparison with existing numerical methods in the literature.

\medskip
\no{\bf Keywords:} Cubic Schr\"{o}dinge equation, Energy
preservation,Exponential integrators, Collocation methods

\medskip
\no{\bf MSC:} 65P10\and 35Q55\and 65M12\and 65M70

\end{abstract}


\section{Introduction}\label{sec:introduction}
It is well known that one of the cornerstones of quantum physics is
the Schr\"{o}dinger equation. The nonlinear Schr\"{o}dinger equation
has long been used to approximately describe the dynamics of
complicated systems, such as Maxwell¡¯s equations for the
description of nonlinear optics or the equations describing surface
water waves, including rogue waves which appear from nowhere and
disappears without a trace (see, e.g.
\cite{Ablowitz81,akhmediev2009waves,benetazzo2015}). This paper is
devoted to designing and analysing novel numerical integrators to
efficiently solve the cubic Schr\"{o}dinger equation
\begin{equation}\label{sch system}
\left\{\begin{aligned} &\mathrm{i}u_{t}+\triangle
u=\lambda|u|^{2}u,\ \ (t,x)\in[0,T]\times \mathbb{T}^d,\\ &
u(0,x)=u_0(x),\ \ \ \ \ \ x\in \mathbb{T}^d,
\end{aligned}\right.
\end{equation}
where   $\mathbb{T}$ is denoted the one-dimensional torus
$\mathbb{R}/(2\pi \mathbb{Z})$ and  for the given positive integer
$d$,   $\mathbb{T}^d$ denotes the $d$-dimensional torus. It is noted
that following the researches in \cite{Cohen 12,18,Gauckler10-1}, we
only consider the cubic Schr\"{o}dinger equation in this paper. With
the  techniques developed and used in this paper, it is also
feasible to extend the novel approach to general Schr\"{o}dinger
equations. In this paper, we consider this equation as a Cauchy
problem  in time (no space discretisation is made). The solutions of
this equation have conservation of the following energy
\begin{equation}\label{H}
\begin{aligned}
H[u]=\frac{1}{2(2 \pi)^d}\int_{\mathbf{T}^d}\Big( |\nabla
u|^2+\frac{\lambda}{2}|u|^4\Big)dx.
\end{aligned}
\end{equation}
It is of great interest  to devise numerical schemes which can
conserve the continuous version  of this important invariant, and
the aim of this paper is to formulate a novel kind of exponential
integrators with  a good property of continuous energy preservation.

Schr\"{o}dinger equations frequently arise  in a wide variety of
applications including   several areas of physics, fiber optics,
quantum transport and other applied sciences (see, e.g.
\cite{Agrawal06,Faou12,jin11,Lubich-book,Sulem99}). Many  numerical
methods have been proposed for the
 integration of Schr\"{o}dinger equations, such as splitting
methods (see, e.g.
\cite{Gauckler10-1,Iserles14,Besse02,Eilinghoff16,Lubich08,McLachlan02,Thalhammer12}),
exponential-type integrators  (see, e.g. \cite{Cohen
12,Cano15,Celledoni08,Dujardin09,Ostermann17}), multi-symplectic
methods (see, e.g. \cite{Berland07,Reich00}) and other effective
methods (see, e.g. \cite{Bejenaru06,Germain09,Islas01,Kishimoto09}).

 In recent decades,  structure-preserving algorithms of differential equations have been received
 much attention  and for the  related  work,
 we   refer
the reader to
\cite{Feng2006,hairer2006,Li_Wu(na2016),wang-2016,wang2017-ANM,wu2017-JCAM,xinyuanbook2015,wu2013-book}
and references therein. It is well known that structure-preserving
algorithms are able to exactly preserve some structural properties
of the underlying continuous system. Amongst the typical subjects of
structure-preserving algorithms  are energy-preserving schemes,
which exactly preserve the energy of the underlying system. There
have been a lot of studies on this topic for Hamiltonian partial
differential equations (PDEs). In \cite{Solin-06book}, finite
element methods were introduced systematically  for numerical
solution of PDEs. The authors in \cite{Dahlby10,IMA2018} researched
discrete gradient methods  for PDEs. The work  in \cite{Celledoni12}
 investigated  the average vector field (AVF) method for discretising
Hamiltonian PDEs. Hamiltonian Boundary Value Methods (HBVMs) were
studied for the semilinear wave equation in \cite{r5} and  were
recently researched  for  nonlinear Schr\"{o}dinger equations with
wave operator in \cite{Brugnano-18}. The adapted AVF method for
Hamiltonian wave equations was analysed in
\cite{JMAA(2015)_Liu_Wu,2017_Liu_Wu}. Other related work is referred
to
\cite{Bridges06,cohen2008,Diaz07,Li_Wu(JCP2016),Liu_Iserles_Wu(2017-2),Matsuo09,CiCP(2017)_Mei_Liu_Wu,Sanz-Serna08}.
  On the
other hand,   exponential integrators have been widely introduced
and developed   for solving first-order ODEs,  and we refer the
 reader to
 \cite{hairer2006,Hochbruck2005,Hochbruck2010,Hochbruck2009,JCP2017_Mei_Wu}
for example. This kind  of methods has  also been studied  in the
numerical integration of  Schr\"{o}dinger equations (see, e.g.
\cite{Cohen 12,Cano15,Celledoni08,Dujardin09,Ostermann17}).
However, it seems that until now,   exponential integrators with a
good  continuous energy preservation for Schr\"{o}dinger equations,
have not been studied in the literature, which motivates this paper.

 With
this premise,  this paper is mainly concerned with exponential
collocation methods for solving  cubic Schr\"{o}dinger equations.
The remainder of this paper is organized as follows. We first
present some notations and preliminaries in Section \ref{sec
preliminaries}. Then the scheme of exponential collocation methods
is
 formulated  and a good continuous energy preservation is proved in Section \ref{sec:Energy-preserving schemes}.
In Section \ref{sec existence}, we  analyse the existence,
uniqueness
 and  smoothness of the methods. Section \ref{sec:regular}
pays attention to the regularity. The convergence of the methods is
studied in Section \ref{sec:convergnce} and the nonlinear stability
is discussed in Section \ref{sec:stability}. Section
\ref{sec:experiments} is devoted to constructing practical
exponential collocation methods in the light of the approach
proposed in this paper, and reporting two numerical experiments  to
demonstrate the excellent qualitative behavior of the new methods.
Section \ref{sec:conclusions} focuses on the concluding remarks.


\section{Notations and preliminaries}\label{sec preliminaries}
In this paper, we use the following notations which were presented
in \cite{Dujardin09}.

\begin{itemize}

\item We denote by $L^2(\mathbb{T}^d)$ (or simply $L^2$)
  the set of (classes of) complex functions $f$ on
$\mathbb{T}^d$  such that $\int_{\mathbb{T}^d} |f(x)|^2 dx<+\infty$,
endowed with the norm
$\norm{f}_{L^2(\mathbb{T}^d)}=\big((2\pi)^{-d}\int_{\mathbb{T}^d}
|f(x)|^2 dx\big)^{1/2}.$

\item For
$\alpha\in \mathbb{R}^+$,   denote by $H^{\alpha}(\mathbb{T}^d)$ (or
simply $H^{\alpha}$) the space of (classes of) complex functions
$f\in L^2(\mathbb{T}^d)$ such that $\sum_{k\in \mathbb{Z}^d
\backslash \{0\} }
 |\hat{f}_k |^2 |k|_2^{2\alpha}<+\infty,$   endowed with the norm
$\norm{f}_{H^{\alpha}}=\Big( |\hat{f}_0 |^2 +\sum_{k\in \mathbb{Z}^d
\backslash \{0\}}
 |\hat{f}_k |^2 |k|_2^{2\alpha}\Big)^{1/2},$  where
$\hat{f}_k=(2\pi)^{-d}\int_{\mathbb{T}^d}  f(x)e^{-\mathrm{i}kx}
dx.$ Note that $L^2 = H^0$ with the same norm.



\item For the operators $A$ from
$H^{\alpha}$ to itself, we denote $ \|A\|_{H^{\alpha}\rightarrow
H^{\alpha}}=\sup\limits_{v\in H^{\alpha}, v\neq 0}\frac{\| A
v\|_{H^{\alpha}}}{\|v\|_{H^{\alpha}}}. $
\end{itemize}

The following result  given in \cite{Dujardin09}  will be useful for
the analysis of this paper.

\begin{prop}\label{prop1} (See  \cite{Dujardin09}.)
With the above notations, if $\varphi$ is a function from
$\mathbb{C}$ to $\mathbb{C}$ bounded by $M\geq0$ on
$\textmd{i}\mathbb{R}$, then for all $h >0$ and $\alpha\geq0$, we
have
$$\norm{\varphi(\textmd{i} h \Delta)}_{H^{\alpha}\rightarrow
H^{\alpha}}\leq M.$$ For example, for all  $\alpha\geq0$, it is true
that $\norm{e^{\textmd{i}h \Delta}}_{H^{\alpha}\rightarrow
H^{\alpha}}=1.$
\end{prop}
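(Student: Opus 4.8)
The plan is to work on the Fourier side, where the Laplacian $\Delta$ on $\mathbb{T}^d$ is diagonalised. First I would recall that for $f \in H^\alpha$ with Fourier coefficients $\hat{f}_k$, one has $\widehat{(\Delta f)}_k = -|k|_2^2 \hat{f}_k$ for $k \in \mathbb{Z}^d$, so that $\Delta$ is the Fourier multiplier with symbol $-|k|_2^2$. Consequently, for $h>0$, the operator $\textmd{i} h \Delta$ is the Fourier multiplier with symbol $-\textmd{i} h |k|_2^2$, which lies in $\textmd{i}\mathbb{R}$ for every $k$. Applying a function $\varphi: \mathbb{C} \to \mathbb{C}$ via the functional calculus then gives $\widehat{(\varphi(\textmd{i} h \Delta) f)}_k = \varphi(-\textmd{i} h |k|_2^2)\, \hat{f}_k$.

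The key step is then a direct norm computation using the definition of $\|\cdot\|_{H^\alpha}$. Writing out
\[
\norm{\varphi(\textmd{i} h \Delta) f}_{H^{\alpha}}^2 = |\varphi(0)|^2 |\hat{f}_0|^2 + \sum_{k \in \mathbb{Z}^d \backslash \{0\}} |\varphi(-\textmd{i} h |k|_2^2)|^2\, |\hat{f}_k|^2\, |k|_2^{2\alpha},
\]
I would use the hypothesis that $|\varphi(\textmd{i} s)| \leq M$ for all $s \in \mathbb{R}$ — in particular at $s = 0$ and at $s = -h|k|_2^2$ — to bound each coefficient $|\varphi(-\textmd{i} h |k|_2^2)|^2 \leq M^2$. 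This yields
\[
\norm{\varphi(\textmd{i} h \Delta) f}_{H^{\alpha}}^2 \leq M^2 \Big( |\hat{f}_0|^2 + \sum_{k \in \mathbb{Z}^d \backslash \{0\}} |\hat{f}_k|^2 |k|_2^{2\alpha} \Big) = M^2 \norm{f}_{H^{\alpha}}^2,
\]
and taking square roots and dividing by $\norm{f}_{H^\alpha}$ (for $f \neq 0$) gives the claimed operator-norm bound. For the example, one checks that $e^{\textmd{i}h\Delta}$ corresponds to $\varphi(z) = e^z$, which has modulus exactly $1$ on $\textmd{i}\mathbb{R}$, so $M = 1$ gives the upper bound $\leq 1$; equality follows by testing on any single Fourier mode $f(x) = e^{\textmd{i} k x}$, for which $\norm{e^{\textmd{i}h\Delta} f}_{H^\alpha} = \norm{f}_{H^\alpha}$.

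I do not expect any serious obstacle here; the statement is essentially a restatement of the spectral mapping / functional calculus for the self-adjoint-up-to-$\textmd{i}$ operator $\Delta$, and the only mild care needed is bookkeeping with the slightly nonstandard $H^\alpha$ norm (which treats the zeroth mode separately with weight $1$ rather than $|k|_2^{2\alpha}$). One should just note that $|\varphi(0)| \leq M$ as well, since $0 \in \textmd{i}\mathbb{R}$, so the zeroth term is handled by the same bound and the split-off of the $k=0$ mode causes no trouble.
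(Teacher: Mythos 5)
Your proof is correct and is the standard argument: the paper itself offers no proof of Proposition \ref{prop1}, merely citing \cite{Dujardin09}, and the Fourier-multiplier computation you give (diagonalising $\Delta$ as the symbol $-|k|_2^2$, noting $-\mathrm{i}h|k|_2^2\in\mathrm{i}\mathbb{R}$, and bounding each mode by $M$ in the $H^\alpha$ norm, with the $k=0$ mode handled by $|\varphi(0)|\le M$) is exactly how the cited result is established. Your verification of the equality $\norm{e^{\mathrm{i}h\Delta}}_{H^\alpha\to H^\alpha}=1$ by testing on a single Fourier mode is also correct.
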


In order to derive the novel methods, we will  use the idea of
continuous time finite element methods in
 a  generalised function  space.  To this end, we first present the
following three definitions.

\begin{defi}\label{def function space}
Define  the  generalised function space  on $[0,T]\times
\mathbb{T}^d$ as follows:
\footnote{Here we use the special notation $\textmd{span}^x$ to
express the generalised function space.}
\begin{equation*}\begin{aligned}
Y(t,x)=&\textmd{span}^x\left\{\varphi_{0}(t),\ldots,\varphi_{r-1}(t)\right\}
=\{w : w(t,x)=\sum_{j=0}^{r-1}\varphi_{j}(t) U_{j}(x),~
U_{j}(x):=\left(
               \begin{array}{c}
                 U_{j}^1(x) \\
                 U_{j}^2(x) \\
               \end{array}
             \right),\\
&\ \ \ \ \quad U_{j}^1(x), U_{j}^2(x)\ \textmd{are arbitrary
functions in}\ H^{\alpha}(\mathbb{T}^d)\},
 \end{aligned}
\end{equation*}
where $r$ is an integer satisfying  $r\geq1$ and  the functions
$\{\varphi_{j}(t)\}_{j=0}^{r-1}: [0,T]\rightarrow \mathbb{C}$ are
supposed to be linearly independent, sufficiently smooth and
integrable.

In this paper, we consider two generalised function spaces $X(t,x)$
and $Y(t,x)$ such that $w_t(t,x)\in Y(t,x)$
 for any $w(t,x)\in X(t,x)$.
 Then choose a time stepsize $h$ and define
$Y_{h}(\tau,x)$ and $X_{h}(\tau,x)$   as
\begin{equation}\label{FF-h}
Y_{h}(\tau,x)=Y(\tau h,x),\ X_{h}(\tau,x)=X(\tau h,x),
\end{equation}
 where $\tau$ is a variable satisfying $
\tau\in[0,1]$. It is noted that  throughout this paper,   the
notations $\tilde{\varphi} (\tau)$ and  $\tilde{f} (\tau,x)$ are
referred to as $ \varphi (\tau h)$ and $f(\tau h,x)$ for all the
functions, respectively.
\end{defi}


\begin{defi}
 The inner product  for the time   is defined by
\begin{equation*}\langle
\tilde{w}_{1}(\tau,x),\tilde{w}_{2}(\tau,x)\rangle_{\tau}=\int_{0}^{1}\tilde{w}_{1}(\tau,x)\cdot
\tilde{w}_{2}(\tau,x)d\tau,
\end{equation*}
where $\tilde{w}_{1}(\tau,x)$ and $\tilde{w}_{2}(\tau,x)$ are two
integrable functions for $\tau$ (scalar-valued or vector-valued)
 on $[0,1]$, and  if they are  both vector-valued functions,  `$\cdot$' denotes the entrywise multiplication
operation.
\end{defi}

\begin{defi}\label{def projection}
 A projection $\mathcal{P}_{h}\tilde{w}(\tau,x)$ onto
$Y_{h}(\tau,x)$ is defined as
\begin{equation}\label{DEF}
\langle
\tilde{v}(\tau,x),\mathcal{P}_{h}\tilde{w}(\tau,x)\rangle_{\tau}=\langle
\tilde{v}(\tau,x),\tilde{w}(\tau,x)\rangle_{\tau},\quad \text{for
any}\ \ \tilde{v}(\tau,x)\in Y_{h}(\tau,x),
\end{equation}
where    $\tilde{w}(\tau,x)$  is a continuous two-dimensional vector
function for $\tau\in[0,1]$.
\end{defi}

 With regard to  the projection operation $\mathcal{P}_{h}$,
 the following property is important.

\begin{lem}\label{proj lem}  The projection $\mathcal{P}_{h}\tilde{w}$ can be explicitly expressed as
$$ \mathcal{P}_{h}\tilde{w}(\tau,x)=\langle
P_{\tau,\sigma},\tilde{w}(\sigma,x)\rangle_{\sigma}, $$ where
$P_{\tau,\sigma}=\sum_{j=0}^{r-1}\tilde{\psi}_{j}(\tau)\tilde{\psi}_{j}(\sigma),$
 and  $\{\tilde{\psi}_j(\tau)\}_{j=0,\ldots,r-1}$   is a standard
orthonormal basis of $Y_{h}(\tau,x)$.
\end{lem}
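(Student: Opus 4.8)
The plan is to verify that the right-hand side $\langle P_{\tau,\sigma},\tilde{w}(\sigma,x)\rangle_{\sigma}$ satisfies the two defining properties of $\mathcal{P}_h\tilde{w}$: it lies in $Y_h(\tau,x)$, and it reproduces the inner product against every test function $\tilde{v}\in Y_h(\tau,x)$ in the sense of \eqref{DEF}. Since the projection characterised by Definition \ref{def projection} is unique (the restriction of $\langle\cdot,\cdot\rangle_\tau$ to the finite-dimensional space $Y_h$ is an inner product, so \eqref{DEF} determines $\mathcal{P}_h\tilde{w}$ uniquely), establishing these two properties finishes the proof.

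First I would exhibit the claimed formula as an element of $Y_h(\tau,x)$. Writing $P_{\tau,\sigma}=\sum_{j=0}^{r-1}\tilde{\psi}_j(\tau)\tilde{\psi}_j(\sigma)$ and pulling the $\tau$-dependence out of the $\sigma$-integral gives
\begin{equation*}
\langle P_{\tau,\sigma},\tilde{w}(\sigma,x)\rangle_{\sigma}
=\sum_{j=0}^{r-1}\tilde{\psi}_j(\tau)\,\langle\tilde{\psi}_j(\sigma),\tilde{w}(\sigma,x)\rangle_{\sigma}
=\sum_{j=0}^{r-1}\tilde{\psi}_j(\tau)\,C_j(x),
\end{equation*}
where $C_j(x):=\langle\tilde{\psi}_j(\sigma),\tilde{w}(\sigma,x)\rangle_{\sigma}$ is a two-dimensional vector of functions of $x$ only. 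Because $\tilde{w}(\cdot,x)$ is continuous in $\tau$ and each $\tilde{\psi}_j$ is integrable, each $C_j(x)$ is a well-defined element of $H^{\alpha}(\mathbb{T}^d)$ componentwise; hence the expression has exactly the form $\sum_{j}\tilde{\psi}_j(\tau)C_j(x)$ required for membership in $Y_h(\tau,x)$, since $\{\tilde{\psi}_j\}$ is a basis of the same time-span as $\{\tilde\varphi_j\}$.

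Next I would check the projection identity. Fix any $\tilde{v}(\tau,x)\in Y_h(\tau,x)$ and expand it in the orthonormal basis as $\tilde{v}(\tau,x)=\sum_{i=0}^{r-1}\tilde{\psi}_i(\tau)V_i(x)$. Using bilinearity of $\langle\cdot,\cdot\rangle_\tau$ together with the orthonormality relation $\langle\tilde{\psi}_i,\tilde{\psi}_j\rangle_\tau=\delta_{ij}$ (here one must be slightly careful that in the vector-valued inner product the entrywise product of a scalar basis function with itself still integrates to $\delta_{ij}$ on each component, which it does), one computes
\begin{equation*}
\langle\tilde{v}(\tau,x),\langle P_{\tau,\sigma},\tilde{w}(\sigma,x)\rangle_{\sigma}\rangle_{\tau}
=\sum_{i,j}\langle\tilde{\psi}_i,\tilde{\psi}_j\rangle_\tau\,V_i(x)\cdot C_j(x)
=\sum_{i}V_i(x)\cdot C_i(x),
\end{equation*}
while on the other side $\langle\tilde{v}(\tau,x),\tilde{w}(\tau,x)\rangle_\tau=\sum_i V_i(x)\cdot\langle\tilde{\psi}_i(\tau),\tilde{w}(\tau,x)\rangle_\tau=\sum_i V_i(x)\cdot C_i(x)$, so the two coincide. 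This proves \eqref{DEF}, and by uniqueness the formula is indeed $\mathcal{P}_h\tilde{w}$.

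The only genuinely delicate point is the bookkeeping with the entrywise ``$\cdot$'' in the vector-valued inner product: one should confirm that orthonormality of $\{\tilde{\psi}_j\}$ is meant componentwise (equivalently, that $P_{\tau,\sigma}$ acts as a reproducing kernel on each of the two scalar components independently), after which the computation decouples into two identical scalar statements and the argument above goes through verbatim. Existence of the orthonormal basis itself is automatic by Gram--Schmidt applied to the linearly independent, integrable family $\{\tilde\varphi_j\}$, using that $\langle\cdot,\cdot\rangle_\tau$ restricted to this finite-dimensional span is a bona fide inner product.
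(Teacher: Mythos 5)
Your proof is correct and rests on the same orthonormality computation as the paper's; the only difference is direction --- the paper expands $\mathcal{P}_{h}\tilde{w}$ in the basis $\{\tilde{\psi}_{j}\}$ and tests against $\tilde{\psi}_{l}(\tau)e_{j}$ to solve for the coefficients $U_{k}(x)=\langle\tilde{\psi}_{k}(\tau),\tilde{w}(\tau,x)\rangle_{\tau}$, whereas you verify the candidate kernel formula and invoke uniqueness of the projection. The two arguments are equivalent, and your extra bookkeeping (membership of the candidate in $Y_{h}(\tau,x)$, the componentwise decoupling of the entrywise product) only makes explicit what the paper leaves implicit.
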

\begin{proof}Since
$P_{h}\tilde{w}(\tau,x)\in Y_{h}(\tau,x)$,
 it can be   expressed as
$P_{h}\tilde{w}(\tau,x)=\sum_{k=0}^{r-1}\tilde{\psi}_{k}(\tau)U_{k}(x).$
By taking $\tilde{v}(\tau)=\tilde{\psi}_{l}(\tau)e_{j}\in
Y_{h}(\tau,x)$ in \eqref{DEF} for $l=0,1,\ldots,r-1$ and $j=1,2$, we
obtain
\begin{equation*}
\begin{aligned}
&\langle \tilde{\psi}_{l}(\tau)e_{j}
,\sum_{k=0}^{r-1}\tilde{\psi}_{k}(\tau)U_{k}(x)\rangle_{\tau}=
\sum_{k=0}^{r-1}\langle\tilde{\psi}_{l}(\tau)e_{j} ,\tilde{\psi}_{k}(\tau)U_{k}(x)\rangle_{\tau}\\
=&\sum_{k=0}^{r-1}\langle\tilde{\psi}_{l}(\tau),\tilde{\psi}_{k}(\tau)\rangle_{\tau}
  (e_{j}\cdot U_{k}(x)) =\langle \tilde{\psi}_{l}(\tau)e_{j}
  ,\tilde{w}(\tau,x)\rangle_{\tau},
\end{aligned}
\end{equation*}
which gives
$\sum_{k=0}^{r-1}\langle\tilde{\psi}_{l}(\tau),\tilde{\psi}_{k}(\tau)\rangle_{\tau}
  U_{k} (x)=\langle
\tilde{\psi}_{l}(\tau),\tilde{w}(\tau,x)\rangle_{\tau}.$ By the
standard orthonormal basis $\{\tilde{\psi}_j(t)\}_{j=0,\ldots,r-1}$,
this result can be formulated as
\begin{equation*}
  \left(\begin{array}{c}U_{0}(x)\\
\vdots\\U_{r-1}(x)\end{array}\right)= \left(\begin{array}{c}\langle
\tilde{\psi}_{0}(\tau),\tilde{w}(\tau,x)\rangle_{\tau}\\ \vdots\\
\langle\tilde{\psi}_{r-1}(\tau),\tilde{w}(\tau,x)\rangle_{\tau}\end{array}\right).
\end{equation*}
Then one has
\begin{equation*}
\begin{aligned}
&P_{h}\tilde{w}(\tau,x)=(\tilde{\psi}_{0}(\tau),\ldots,\tilde{\psi}_{r-1}(\tau))  \otimes I_{2\times 2}\left(\begin{array}{c}U_{0}(x)\\
\vdots\\U_{r-1}(x)\end{array}\right)\\
=&(\tilde{\psi}_{0}(\tau),\ldots,\tilde{\psi}_{r-1}(\tau)) \otimes
I_{2\times 2}\left(\begin{array}{c}\langle
\tilde{\psi}_{0}(\tau),\tilde{w}(\tau,x)\rangle_{\tau}\\ \vdots\\
\langle\tilde{\psi}_{r-1}(\tau),\tilde{w}(\tau,x)\rangle_{\tau}\end{array}\right)=\langle
P_{\tau,\sigma},\tilde{w}(\sigma,x)\rangle_{\sigma},
\end{aligned}
\end{equation*}
which proves the result.
 \hfill  $\square$
\end{proof}

\begin{rem}
It is noted that the above three definitions and Lemma \ref{proj
lem} can be considered as the generalised version of those presented
in \cite{Li_Wu(na2016)}.   We also remark  that one can make
different choices of  $Y_h(\tau,x)$ and $X_h(\tau,x)$, which will
produce different
  methods by taking the  methodology proposed in this paper.
\end{rem}
\section{Formulation of  exponential collocation methods}\label{sec:Energy-preserving schemes}

Define  the linear differential operator $\mathcal{A}$ by
\begin{equation*}\label{operator}
(\mathcal{A}u)(x,t)= \Delta u(x,t)
\end{equation*}
and let  $f(u)=-\lambda|u|^{2}u$.
 Then the system \eqref{sch system} is identical to
\begin{equation}\label{sch ode}
\begin{aligned}
&u_{t}=\mathrm{i}\mathcal{A}u+\mathrm{i}f(u), \ \  \ \
u(0,x)=u_0(x).
\end{aligned}
\end{equation}
The solutions of this equation satisfy Duhamel's formula
\begin{equation}\label{Duhamel formu}
\begin{aligned}
u(t_n+h)=e^{\mathrm{i}h \mathcal{A}}u(t_n)+\mathrm{i}   \int_{0}^h
e^{\mathrm{i}(h-\xi) \mathcal{A}}f(u(t_n+\xi))d\xi.
\end{aligned}
\end{equation}
Let $u = p + \textmd{i}q,$ and then the equation \eqref{sch system}
can be rewritten as a pair of real initial-value problems
\begin{equation}\label{rea sd}
\begin{aligned}
&\left(
  \begin{array}{c}
   p_t \\
  q_t\\
  \end{array}
\right)=
 \begin{aligned}
 \left(
   \begin{array}{cc}
     0 & -\mathcal{A} \\
     \mathcal{A} & 0 \\
   \end{array}
 \right)\left(
  \begin{array}{c}
   p \\
   q \\
  \end{array}
\right)+\left(
          \begin{array}{c}
           \lambda(p^{2}+ q^{2})  q \\
            -\lambda( p^{2}+ q^{2})  p \\
          \end{array}
        \right)
\end{aligned},\ \
  \left(
                       \begin{array}{c}
                         p(0,x) \\
                         q(0,x) \\
                       \end{array}
                     \right)=\left(
                               \begin{array}{c}
                                 \textmd{Re}(u_0(x)) \\
                                 \textmd{Im}(u_0(x)) \\
                               \end{array}
                             \right).
\end{aligned}
\end{equation}
In this case, the energy  of this system is expressed by
 \begin{equation}\label{rea H}
\begin{aligned}
&\mathcal{H}(p,q) =\frac{1}{2(2 \pi)^d}\int_{\mathbf{T}^d}\Big(
|\nabla p|^2+ |\nabla
q|^2+\frac{\lambda}{2}(p^2+q^2)^2 \Big)dx.\\
\end{aligned}
\end{equation}
Accordingly, the system
 \eqref{rea sd} can be  formulated  as the
following infinite-dimensional real Hamiltonian system
\begin{equation}\label{new sch}
\begin{aligned}
y_t=\mathcal{K}y+g(y)=J^{-1}\frac{\delta \mathcal{H}}{\delta y},\ \
\ \ y_0(x)= \left(
                       \begin{array}{c}
                         p(0,x) \\
                         q(0,x) \\
                       \end{array}
                     \right),
\end{aligned}
\end{equation}
where $ \mathcal{K}=\left(
   \begin{array}{cc}
     0 & -\mathcal{A} \\
     \mathcal{A} & 0 \\
   \end{array}
 \right),$
$J=\left(
      \begin{array}{cc}
        0 & -1 \\
        1 & 0 \\
      \end{array}
    \right),\ \ y=\left(
                             \begin{array}{c}
                               p \\
                               q \\
                             \end{array}
                           \right),$ and $g(y)=\left(
          \begin{array}{c}
           \lambda(p^{2}+ q^{2})  q \\
            -\lambda( p^{2}+ q^{2})  p \\
          \end{array}
        \right).$

With the preliminaries described above,   we first derive the
exponential collocation methods for solving the real-valued equation
\eqref{new sch} and then present the methods for solving the cubic
Schr\"{o}dinger equation \eqref{sch system}.

We consider a function $\tilde{z}(\tau,x)$ with
$\tilde{z}(0,x)=y_{0}(x)$, satisfying that
\begin{equation}\label{projection}
 \begin{aligned}
\tilde{z}_{t}(\tau,x)&=\mathcal{K} \tilde{z}(\tau,x)+\mathcal{P}_{h}
g(\tilde{z}(\tau,x)) =\mathcal{K} \tilde{z}(\tau,x)+\langle
P_{\tau,\sigma},g(\tilde{z}(\sigma,x))\rangle_{\sigma}.
\end{aligned}
\end{equation}
 Applying     Duhamel's formula  \eqref{Duhamel formu} to
\eqref{projection} leads to
\begin{equation*}
 \begin{aligned} &\tilde{z}(\tau,x)
=e^{\tau h \mathcal{K}}y_{0}(x)+\tau h    \int_{0}^1 e^{(1-\xi)\tau
h \mathcal{K}}\langle P_{\xi
\tau,\sigma},g(\tilde{z}(\sigma,x))\rangle_{\sigma}d\xi\\
 &=e^{\tau h \mathcal{K}}y_{0}(x)+\tau h  \int_{0}^1 e^{(1-\xi)\tau h
\mathcal{K}} \int_{0}^1 \sum_{j=0}^{r-1}\tilde{\psi}_{j}(\xi
\tau)\tilde{\psi}_{j}(\sigma)g(\tilde{z}(\sigma,x)) d\sigma d\xi\\
&=e^{\tau h \mathcal{K}}y_{0}(x)+\tau h  \int_{0}^1 \sum_{j=0}^{r-1}
 \int_{0}^1 e^{(1-\xi)\tau h \mathcal{K}}\tilde{\psi}_{j}(\xi \tau)d\xi \tilde{\psi}_{j}(\sigma)g(\tilde{z}(\sigma,x))
d\sigma.
\end{aligned}
\end{equation*}
This yields the following definition of exponential collocation
methods.
\begin{defi}\label{def ECMr}
An exponential collocation method  for  solving the real Hamiltonian
initial-value problem \eqref{new sch} is defined by
\begin{equation}\label{ECMr}
\left\{\begin{aligned} &\tilde{z}(\tau,x)=e^{\tau h
\mathcal{K}}y_{0}(x)+\tau h \int_{0}^1
\bar{A}_{\tau,\sigma}(\mathcal{K}) g(\tilde{z}(\sigma,x))
d\sigma,\quad 0\leq\tau\leq1,\\
 & y_{1}(x)=\tilde{z}(1,x),
\end{aligned}\right.
\end{equation}
where $h$ is a time stepsize and
\begin{equation}\label{Aexplicit}
\bar{A}_{\tau,\sigma}(\mathcal{K})= \int_{0}^1 e^{(1-\xi)\tau h
\mathcal{K}}P_{\xi \tau,\sigma}d\xi=\sum_{i=0}^{r-1}
 \int_{0}^1 e^{(1-\xi)\tau h \mathcal{K}}\tilde{\psi}_{i}(\xi \tau)d\xi
\tilde{\psi}_{i}(\sigma).
\end{equation}

\end{defi}

 \begin{theo}\label{EP}
If $\tilde{z}(\tau,x) \in X_h(\tau,x)$, the continuous energy
$\mathcal{H}$ determined by \eqref{rea H} is preserved exactly by
the method \eqref{ECMr}, i.e.,
$$\mathcal{H}(y_{1}(x))=\mathcal{H}(y_{0}(x)).$$
If $\tilde{z}(\tau,x) \notin X_h(\tau,x)$, the exponential
collocation method \eqref{ECMr} approximately preserves the
continuous energy $\mathcal{H}$ with the following accuracy
$$\mathcal{H}(y_{1}(x))=\mathcal{H}(y_{0}(x))+\mathcal{O}(h^{2r+1}).$$
\end{theo}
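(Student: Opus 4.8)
The plan is to differentiate the continuous energy $\mathcal{H}$ along the stage function $\tilde{z}(\tau,x)$ of \eqref{ECMr}, use the Hamiltonian structure to cancel everything except the projection defect $(I-\mathcal{P}_h)g(\tilde{z})$, and then close the argument with the Galerkin identity \eqref{DEF}. Write $(\cdot,\cdot)_x$ for the normalised, entrywise $L^2(\mathbb{T}^d)$ inner product underlying \eqref{rea H}; with respect to it the constant matrix $J$ is skew, so $(Ja,b)_x=-(a,Jb)_x$ and $(Ja,a)_x=0$. From the Hamiltonian form \eqref{new sch} one has $\delta\mathcal{H}/\delta y(v)=J\bigl(\mathcal{K}v+g(v)\bigr)$, and substituting the differential form \eqref{projection} of the method, $\tilde{z}_t=\mathcal{K}\tilde{z}+\mathcal{P}_h g(\tilde{z})$, gives the basic identity
\[
\frac{\delta\mathcal{H}}{\delta y}(\tilde{z})=J\tilde{z}_t+J(I-\mathcal{P}_h)g(\tilde{z}).
\]
Since $\tilde{z}(\tau,x)=z(\tau h,x)$ we have $\tilde{z}_\tau=h\tilde{z}_t$, so the chain rule together with $(J\tilde{z}_t,\tilde{z}_t)_x=0$ and skew-symmetry yields
\[
\frac{\mathrm{d}}{\mathrm{d}\tau}\mathcal{H}\bigl(\tilde{z}(\tau,\cdot)\bigr)=h\bigl(J(I-\mathcal{P}_h)g(\tilde{z}),\tilde{z}_t\bigr)_x=-h\bigl((I-\mathcal{P}_h)g(\tilde{z}),J\tilde{z}_t\bigr)_x .
\]
Integrating over $\tau\in[0,1]$, using $\tilde{z}(0,\cdot)=y_0$ and $\tilde{z}(1,\cdot)=y_1$ and exchanging the order of integration, one arrives at
\[
\mathcal{H}(y_1)-\mathcal{H}(y_0)=-h\,(2\pi)^{-d}\int_{\mathbb{T}^d}\bigl\langle (I-\mathcal{P}_h)g(\tilde{z}(\cdot,x)),\,J\tilde{z}_t(\cdot,x)\bigr\rangle_\tau\,\mathrm{d}x .
\]

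For the exact statement, assume $\tilde{z}\in X_h$. By the defining property of the pair $(X,Y)$ in Definition~\ref{def function space}, $\tilde{z}_t(\cdot,x)\in Y_h(\cdot,x)$ for every fixed $x$, and since $Y_h$ is invariant under the constant matrix $J$ we also have $J\tilde{z}_t(\cdot,x)\in Y_h(\cdot,x)$. The projection identity \eqref{DEF} says precisely that $\langle \tilde{v},(I-\mathcal{P}_h)w\rangle_\tau=0$ for all $\tilde{v}\in Y_h$, so the integrand above vanishes and $\mathcal{H}(y_1)=\mathcal{H}(y_0)$.

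For the near-preservation statement, $J\tilde{z}_t$ need no longer lie in $Y_h$, but $\mathcal{P}_h(J\tilde{z}_t)\in Y_h$ is orthogonal to $(I-\mathcal{P}_h)g(\tilde{z})$ by \eqref{DEF}, so in the last display $J\tilde{z}_t$ may be replaced by $(I-\mathcal{P}_h)(J\tilde{z}_t)$. Estimating the two factors by the Cauchy--Schwarz inequality in $\tau$ and the $r$-th order approximation property of $\mathcal{P}_h$ (valid when $\mathrm{span}\{\varphi_0,\dots,\varphi_{r-1}\}$ has that power, e.g. when it contains the polynomials of degree $\le r-1$), one has $\|(I-\mathcal{P}_h)\phi(\cdot,x)\|_\tau\le C_r\|\partial_\tau^{r}\phi(\cdot,x)\|_\tau$. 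The decisive point is that both $\tau\mapsto g(\tilde{z}(\tau,x))$ and $\tau\mapsto\tilde{z}_t(\tau,x)$ are evaluations at $\tau h$ of smooth functions of the physical time, so every $\partial_\tau$ produces a factor $h$; invoking the regularity bounds of Section~\ref{sec:regular} (uniform in $h$ for small $h$, with the unbounded Laplacian block of $\mathcal{K}$ controlled there) gives $\|\partial_\tau^{r}g(\tilde{z}(\cdot,x))\|_\tau=\mathcal{O}(h^{r})$ and $\|\partial_\tau^{r}\tilde{z}_t(\cdot,x)\|_\tau=\mathcal{O}(h^{r})$ uniformly in $x$. Multiplying the two $\mathcal{O}(h^{r})$ bounds, integrating over $\mathbb{T}^d$ and accounting for the prefactor $h$ yields $\mathcal{H}(y_1)-\mathcal{H}(y_0)=\mathcal{O}(h^{2r+1})$. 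Well-posedness and smoothness of $\tilde{z}$, hence of $y_1$, are supplied by Section~\ref{sec existence}.

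The short algebraic identity of the first paragraph carries essentially all of the exact-preservation part. The genuine obstacle is the near-preservation estimate: one must make rigorous the heuristic that ``each $\partial_\tau$ costs a power of $h$'' for the nonlinearity and for the time-derivative evaluated along the numerical solution, and this is precisely the content of the regularity analysis of Section~\ref{sec:regular}, which has to furnish uniform-in-$h$ bounds on high-order time-derivatives of $\tilde{z}$ despite the unboundedness of $\mathcal{K}$. A minor additional point is that the order $2r+1$ presupposes the time basis $\{\varphi_j\}$ has $r$-th order approximation power.
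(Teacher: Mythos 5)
Your proof is correct and follows essentially the same route as the paper: differentiate $\mathcal{H}$ along $\tilde{z}$, use skew-symmetry of $J$ and the Galerkin orthogonality \eqref{DEF} to reduce everything to the projection defect $(I-\mathcal{P}_h)g(\tilde{z})$ paired against $J\tilde{z}_t$, which vanishes when $\tilde{z}_t\in Y_h$ and otherwise is a product of two $\mathcal{O}(h^r)$ defects (the paper's Lemmas \ref{lemmare}--\ref{lemmaph}) times the prefactor $h$. Your explicit caveat that the order $2r+1$ presupposes $r$-th order approximation power of the time basis is a point the paper leaves implicit in Proposition \ref{prop2}.
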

\begin{proof}We first prove the first statement. Since $\tilde{z}(\tau,x)\in X_{h}(\tau,x)$, we obtain that
$\tilde{z}_{t}(\tau,x)\in Y_{h}(\tau,x)$. Therefore, it follows from
\eqref{projection} that
\begin{equation}\label{inner}
\begin{aligned}
&\int_{0}^{1}\tilde{z}_{t}(\tau,x)^{\intercal}J^{\intercal}\tilde{z}_{t}(\tau,x)d\tau
=\int_{0}^{1}\tilde{z}_{t}(\tau,x)^{\intercal}J^{\intercal}\big(\mathcal{K}\tilde{z}(\tau,x)+\mathcal{P}_{h}g(\tilde{z}(\tau,x))\big)d\tau\\
 =& \int_{0}^{1}\tilde{z}_{t}(\tau,x)^{\intercal}J^{\intercal}\big(\mathcal{K}\tilde{z}(\tau,x)+g(\tilde{z}(\tau,x))\big)d\tau.\\
\end{aligned}
\end{equation}
 Because  $J$  is skew symmetric,      we have
$$0=\int_{0}^{1}\tilde{z}_{t}(\tau,x)^{\intercal}J^{\intercal}\tilde{z}_{t}(\tau,x)d\tau
=\int_{0}^{1}\tilde{z}_{t}(\tau,x)^{\intercal}J\big(\mathcal{K}
\tilde{z}(\tau,x)+g(\tilde{z}(\tau,x))\big)d\tau.$$ Therefore, it is
true that
\begin{equation*}
\begin{aligned}
&\mathcal{H}(y_{1}(x))-\mathcal{H}(y_{0}(x))=\int_{0}^{1}\frac{\partial}{\partial\tau}\mathcal{H}(\tilde{z}(\tau,x))d\tau
=h\int_{0}^{1}\tilde{z}_{t}(\tau,x)^{\intercal}\frac{\delta \mathcal{H}(\tilde{z})}{\delta \tilde{z}}  d\tau\\
&
=h\int_{0}^{1}\tilde{z}_{t}(\tau,x)^{\intercal}J\big(\mathcal{K}\tilde{z}(\tau,x)+g(\tilde{z}(\tau,x))\big)d\tau=h\cdot0=
0.
\end{aligned}
\end{equation*}

 For the second statement, according to the above
analysis, we obtain
\begin{equation*}
\begin{aligned}
&\mathcal{H}(y_{1}(x))-\mathcal{H}(y_{0}(x))
=h\int_{0}^{1}\tilde{z}_{t}(\tau,x)^{\intercal}J\big(\mathcal{K}\tilde{z}(\tau,x)+g(\tilde{z}(\tau,x))\big)d\tau\\
=&h\int_{0}^{1}\tilde{z}_{t}(\tau,x)^{\intercal}J\big(\mathcal{K}\tilde{z}(\tau,x)+\mathcal{P}_{h}g(\tilde{z}(\tau,x))
+g(\tilde{z}(\tau,x))-\mathcal{P}_{h}g(\tilde{z}(\tau,x))\big)d\tau\\
=&h\int_{0}^{1}\tilde{z}_{t}(\tau,x)^{\intercal}J\tilde{z}_{t}(\tau,x)+
h\int_{0}^{1}\tilde{z}_{t}(\tau,x)^{\intercal}J\big(g(\tilde{z}(\tau,x))-\mathcal{P}_{h}g(\tilde{z}(\tau,x))\big)d\tau\\
=&
h\int_{0}^{1}\tilde{z}_{t}(\tau,x)^{\intercal}J\big(g(\tilde{z}(\tau,x))-\mathcal{P}_{h}g(\tilde{z}(\tau,x))\big)d\tau.
\end{aligned}
\end{equation*}
From the results of Lemmas \ref{lemmare}-\ref{lemmaph} which are
proved in Section \ref{sec:regular}, it follows that
$\tilde{z}_{t}(\tau,x)=\mathcal{P}_{h}\tilde{z}_{t}(\tau,x)+\mathcal{O}(h^{r})$
and
$g(\tilde{z}(\tau,x))-\mathcal{P}_{h}g(\tilde{z}(\tau,x))=\mathcal{O}(h^{r})$.
Therefore, one arrives at
\begin{equation*}
\begin{aligned}
&\mathcal{H}(y_{1}(x))-\mathcal{H}(y_{0}(x)) \\=&
h\int_{0}^{1}\big(\mathcal{P}_{h}\tilde{z}_{t}(\tau,x)+\mathcal{O}(h^{r})\big)^{\intercal}J\big(g(\tilde{z}(\tau,x))-\mathcal{P}_{h}g(\tilde{z}(\tau,x))\big)d\tau\\
 =&
h\int_{0}^{1}\big(\mathcal{P}_{h}\tilde{z}_{t}(\tau,x)\big)^{\intercal}J\big(g(\tilde{z}(\tau,x))-\mathcal{P}_{h}g(\tilde{z}(\tau,x))\big)d\tau+\mathcal{O}(h^{2r+1})\\
 =&
h\int_{0}^{1}\big(\mathcal{P}_{h}\tilde{z}_{t}(\tau,x)\big)^{\intercal}J\big(g(\tilde{z}(\tau,x))-g(\tilde{z}(\tau,x))\big)d\tau+\mathcal{O}(h^{2r+1})=\mathcal{O}(h^{2r+1}).
\end{aligned}
\end{equation*}

 \hfill  $\square$
 \end{proof}

In terms of the variables appearing in \eqref{sch system} instead of
\eqref{new sch},  we can rewrite Definition \ref{def ECMr} for the
Schr\"{o}dinger equation \eqref{sch system} as follows.

 \begin{defi}\label{defECMr}An exponential collocation methods  (denoted as ECMr) with a time stepsize $h$
for the cubic Schr\"{o}dinger equation  \eqref{sch system} is
defined by
\begin{equation}\label{ECM}
\left\{\begin{aligned} &\tilde{u}(\tau,x)=e^{\tau h
\mathrm{i}\mathcal{A}}u_{0}(x)+ \tau h \int_{0}^1
\bar{A}_{\tau,\sigma}( \mathrm{i} \mathcal{A})
\mathrm{i}f(\tilde{u}(\sigma,x)) d\sigma,\quad 0\leq\tau\leq1,\\
& u_{1}(x)=\tilde{u}(1,x),
\end{aligned}\right.
\end{equation}
where
\begin{equation}\label{Aexplicit}
\bar{A}_{\tau,\sigma}(\mathrm{i} \mathcal{A})= \int_{0}^1
e^{(1-\xi)\tau h
 \mathrm{i} \mathcal{A}}P_{\xi \tau,\sigma}d\xi=\sum_{j=0}^{r-1}
 \int_{0}^1 e^{(1-\xi)\tau h  \mathrm{i} \mathcal{A}}\tilde{\psi}_{j}(\xi \tau)d\xi
\tilde{\psi}_{j}(\sigma).
\end{equation}
\end{defi}

\begin{rem}It is noted that this novel method shares
the advantages of exponential integrators  and collocation methods.
\end{rem}

\section{The existence, uniqueness
 and  smoothness}\label{sec existence}
In the remainder of this paper, we use the following assumptions on
the exact solution $u$ of \eqref{sch system} and on the
non-linearity $f$.

\begin{assum}\label{ass1} (See  \cite{Dujardin09}.)
It is assumed that the    Schr\"{o}dinger equation \eqref{sch
system} admits an exact solution $u:[0, T ] \rightarrow
H^{\alpha}(\mathbb{T}^d)$ which is sufficiently smooth. In
particular, there exists $R> 0$ such that $u\in B(\bar{u}_{0},R)$
for all $t \in [0, T ]$, where $$B(\bar{u}_{0},R)=\left\{u\in
H^{\alpha}(\mathbb{T}^d): ||u-\bar{u}_{0}||_{H^{\alpha}}\leq
R\right\} $$ and $\bar{u}_{0}=e^{\tau h
\mathrm{i}\mathcal{A}}u_{0}(x)$.
\end{assum}

\begin{assum}\label{ass2}(See  \cite{Dujardin09}.)
We assume that the mapping $f : [0, T ] \rightarrow
H^{\alpha}(\mathbb{T}^d)$ is sufficiently smooth.
\end{assum}

\begin{assum}\label{ass3}
The $r$th-order derivative of $f$     is assumed that $f^{(r)}\in
C^1([0,T],H^{\alpha}(\mathbb{T}^d))$.  And we denote $
D_{n}=\max_{u\in B(\bar{u}_{0},R)}||f^{(n)}(u)||_{H^{\alpha}}$ for $
n=0,\ldots,r. $
\end{assum}

\begin{assum}\label{ass4}
The initial value function $u_0(x)$ is assumed to be regular enough
such that $\norm{\mathcal{A}^lu_{0}(x)}_{H^{\alpha}}$ are uniformly
bounded for $l=1,2,\ldots.$
\end{assum}


Note that the first three assumptions are fulfilled in the case of
the cubic non-linear Schr\"{o}dinger equation (see \cite{Alinhac91}
for example, Chapter II, Proposition 2.2), at least when
$\alpha>d/2$ and $\alpha-d/2\not\in \mathbb{N}$.

According to Proposition  \ref{prop1}, one gets that the
coefficients $e^{\tau h  \mathrm{i} \mathcal{A}}$ and
$\bar{A}_{\tau,\sigma}( \mathrm{i} \mathcal{A})$ of our methods for
$0\leq\tau\leq1$ and $0\leq\sigma\leq1$   are uniformly bounded.
 Hence, we let
\begin{equation}\label{MC}
M_{k}=\max_{\tau,\sigma,h\in[0,1]}\norm{\frac{\partial^{k}\bar{A}_{\tau,\sigma}}{\partial
h^{k}}}_{H^{\alpha}\rightarrow H^{\alpha}},\
C_{k}=\max_{\tau,h\in[0,1]}\norm{\frac{\partial^{k} e^{\tau h
\mathrm{i}\mathcal{A}}}{\partial h^{k}}u_{0}(x)}_{H^{\alpha}}, \
k=0,1,\ldots.
\end{equation}
It is noted that these bounds are finite when the operators are
applied over certain regular enough functions.

\begin{theo}\label{eus} Under the above assumptions,
 if the time stepsize $h$ satisfies
\begin{equation}\label{cond2} 0\leq
h\leq
\kappa<\min\left\{\frac{1}{M_{0}D_{1}},\frac{R}{M_{0}D_{0}},1\right\},
\end{equation}
 then the ECMr
method \eqref{ECM} admits a unique solution $\tilde{u}(\tau,x)$
which  smoothly depends on  $h$.
\end{theo}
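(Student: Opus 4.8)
The plan is to prove existence and uniqueness by a fixed-point (contraction mapping) argument in the space $C([0,1], H^\alpha(\mathbb{T}^d))$, and then to obtain the smooth dependence on $h$ by differentiating the fixed-point equation with respect to $h$ and applying the implicit function theorem (or, more elementarily, by a second contraction argument on the derivatives). First I would define the operator $\mathcal{T}$ acting on a candidate function $\tilde{u}(\tau,x)$ by the right-hand side of \eqref{ECM}, namely
\begin{equation*}
(\mathcal{T}\tilde{u})(\tau,x)=e^{\tau h \mathrm{i}\mathcal{A}}u_{0}(x)+\tau h \int_{0}^1 \bar{A}_{\tau,\sigma}(\mathrm{i}\mathcal{A})\,\mathrm{i}f(\tilde{u}(\sigma,x))\,d\sigma,
\end{equation*}
and consider it on the closed ball $\mathcal{B}_R=\{\tilde{u}\in C([0,1],H^\alpha): \sup_{\tau\in[0,1]}\|\tilde{u}(\tau,\cdot)-\bar{u}_0\|_{H^\alpha}\le R\}$, which is a complete metric space under the sup-in-$\tau$ norm. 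Since $\bar{u}_0=e^{\tau h\mathrm{i}\mathcal{A}}u_0$ is precisely the first term of $\mathcal{T}\tilde u$, membership in $\mathcal{B}_R$ forces $\tilde u(\tau,\cdot)\in B(\bar u_0,R)$ for every $\tau$, so Assumption \ref{ass3} applies and $\|f(\tilde u(\sigma,\cdot))\|_{H^\alpha}\le D_0$.

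The two estimates needed are the self-map property and the contraction property. For the self-map property, using $|\tau|\le 1$, the bound $\|\bar A_{\tau,\sigma}(\mathrm{i}\mathcal{A})\|_{H^\alpha\to H^\alpha}\le M_0$ from \eqref{MC} (which is finite by Proposition \ref{prop1}, since $\bar A_{\tau,\sigma}$ is built from $\xi\mapsto e^{(1-\xi)\tau h\,\mathrm{i}\mathcal{A}}$ integrated against bounded scalar functions), and $\|f(\tilde u)\|_{H^\alpha}\le D_0$, one gets $\|\mathcal{T}\tilde u-\bar u_0\|_{H^\alpha}\le h M_0 D_0\le \kappa M_0 D_0<R$ by \eqref{cond2}. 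For the contraction property, for $\tilde u,\tilde v\in\mathcal{B}_R$ the linear term $e^{\tau h\mathrm{i}\mathcal{A}}u_0$ cancels, and using the mean value inequality $\|f(\tilde u)-f(\tilde v)\|_{H^\alpha}\le D_1\|\tilde u-\tilde v\|_{H^\alpha}$ (valid since the segment joining $\tilde u(\sigma,\cdot)$ and $\tilde v(\sigma,\cdot)$ stays in $B(\bar u_0,R)$, on which $\|f^{(1)}\|\le D_1$) together with $\|\bar A_{\tau,\sigma}\|\le M_0$ gives $\|\mathcal{T}\tilde u-\mathcal{T}\tilde v\|\le h M_0 D_1\|\tilde u-\tilde v\|\le \kappa M_0 D_1\|\tilde u-\tilde v\|$ with $\kappa M_0 D_1<1$ by \eqref{cond2}. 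Banach's fixed-point theorem then yields a unique $\tilde u(\tau,x)\in\mathcal{B}_R$ solving \eqref{ECM}; uniqueness in the whole space (not just the ball) follows because any solution automatically lies in $\mathcal{B}_R$ by the self-map estimate applied to itself.

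For the smooth dependence on $h$, I would treat $h$ as a parameter and regard the fixed-point equation as $\Phi(\tilde u, h)=\tilde u - \mathcal{T}(\tilde u,h)=0$; the kernels $e^{\tau h\mathrm{i}\mathcal{A}}u_0$ and $\bar A_{\tau,\sigma}(\mathrm{i}\mathcal{A})$ depend smoothly on $h$ (their $h$-derivatives are controlled by the finite constants $M_k, C_k$ in \eqref{MC}, using Assumption \ref{ass4} for differentiability of $e^{\tau h\mathrm{i}\mathcal{A}}u_0$, since each $h$-derivative brings down a factor $\tau\mathrm{i}\mathcal{A}$), and $f$ is smooth by Assumptions \ref{ass2}--\ref{ass3}; moreover the partial derivative $D_{\tilde u}\Phi = I - D_{\tilde u}\mathcal{T}$ is boundedly invertible because $\|D_{\tilde u}\mathcal{T}\|\le hM_0D_1<1$, so a Neumann series inverts it. The implicit function theorem in the Banach space $C([0,1],H^\alpha)$ then gives that $h\mapsto \tilde u(\cdot,\cdot;h)$ is as smooth as the data allow. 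Alternatively, and perhaps cleaner for the paper's subsequent regularity estimates, one differentiates \eqref{ECM} formally in $h$ to get a linear fixed-point equation for $\partial_h\tilde u$ of the same contractive type, solves it by the same contraction constant, and checks a posteriori that the solution is genuinely the derivative; iterating handles higher derivatives.

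The main obstacle I expect is not any single estimate but the bookkeeping needed to guarantee that the argument of $f$ stays inside $B(\bar u_0,R)$ so that Assumption \ref{ass3} (hence the bounds $D_0,D_1$) is legitimately in force — this is exactly what the self-map estimate on $\mathcal{B}_R$ secures, and it is why the condition \eqref{cond2} is phrased with both $R/(M_0D_0)$ and $1/(M_0D_1)$. A secondary technical point is verifying that $\mathcal{T}$ indeed maps $C([0,1],H^\alpha)$ into itself, i.e. continuity in $\tau$ of $\tau\mapsto e^{\tau h\mathrm{i}\mathcal{A}}u_0$ and of $\tau\mapsto\int_0^1\bar A_{\tau,\sigma}\mathrm{i}f\,d\sigma$; this is routine given the strong continuity of the group $e^{\tau h\mathrm{i}\mathcal{A}}$ on $H^\alpha$ (Proposition \ref{prop1}) and the smoothness of the scalar basis functions $\tilde\psi_j$.
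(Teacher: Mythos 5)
Your proposal is correct and, for the existence--uniqueness part, is essentially the paper's own argument: the paper runs the Picard iteration $\tilde{u}_{n+1}=e^{\tau h\mathrm{i}\mathcal{A}}u_0+\tau h\int_0^1\bar{A}_{\tau,\sigma}(\mathrm{i}\mathcal{A})\,\mathrm{i}f(\tilde{u}_n)\,d\sigma$ explicitly, establishes by induction that the iterates stay in $B(\bar{u}_0,R)$ (your self-map estimate, using $\kappa<R/(M_0D_0)$), and gets geometric decay of $\|\tilde{u}_{n+1}-\tilde{u}_n\|_c$ with ratio $\beta=\kappa M_0D_1<1$ — i.e.\ it proves the Banach fixed-point theorem by hand with exactly your two constants. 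Where you genuinely diverge is the smooth dependence on $h$: you propose the implicit function theorem (with $I-D_{\tilde u}\mathcal{T}$ inverted by a Neumann series), whereas the paper stays constructive and differentiates the \emph{iteration} in $h$, showing first that $\{\partial_h\tilde{u}_n\}$ is uniformly bounded by $\alpha/(1-\beta)$ and then that $\|\partial_h\tilde{u}_{n+1}-\partial_h\tilde{u}_n\|_c\leq n\gamma\beta^{n-1}+\beta^nC^*$, so the derivative sequence converges uniformly; higher derivatives are handled the same way. Your IFT route is cleaner and shorter but requires $C^1$ (indeed $C^k$) dependence of the map $\mathcal{T}$ on $(\tilde u,h)$ in the Banach-space sense, which is where the constants $M_k,C_k$ and the Lipschitz bound $L_2$ on $f^{(1)}$ enter; the paper's route produces the explicit bounds on $\partial_h^k\tilde u$ that are reused later (e.g.\ in the regularity Lemma of Section 5), which is presumably why the authors prefer it. One small caveat on your side remark that uniqueness holds ``in the whole space'': concluding that an arbitrary solution lies in $\mathcal{B}_R$ already requires bounding $f$ at that solution, and $D_0,D_1$ are only defined on $B(\bar u_0,R)$, so strictly speaking both you and the paper prove uniqueness among solutions taking values in that ball.
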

\begin{proof}
By setting $\tilde{u}_{0}(\tau,x)=u_{0}(x)$ and defining
\begin{equation}\label{recur}
\tilde{u}_{n+1}(\tau,x)= e^{\tau h
\mathrm{i}\mathcal{A}}u_{0}(x)+\tau h \int_{0}^1
\bar{A}_{\tau,\sigma}(\mathrm{i}\mathcal{A}) \mathrm{i}
g(\tilde{u}_n(\sigma,x)) d\sigma, \quad n=0,1,\ldots,
\end{equation}
we get a function series $\{\tilde{u}_{n}(\tau,x)\}_{n=0}^{\infty}.$
If $\left\{\tilde{u}_{n}(\tau,x)\right\}_{n=0}^{\infty}$ is
uniformly convergent,
$\lim\limits_{n\to\infty}\tilde{u}_{n}(\tau,x)$ is a solution  of
 the ECMr
method \eqref{ECM}.

By induction,  it follows from \eqref{cond2} and \eqref{recur} that
$ \tilde{u}_{n}(\tau,x)\in B(\bar{u}_{0},R)$ for $n=0,1,\ldots.$
Using \eqref{recur}, we have
\begin{equation*}
\begin{aligned}
&||\tilde{u}_{n+1}(\tau,x)-\tilde{u}_{n}(\tau,x)||_{H^{\alpha}}\leq \tau h\int_{0}^{1}M_{0}D_{1}||\tilde{u}_{n}(\sigma,x)-\tilde{u}_{n-1}(\sigma,x)||_{H^{\alpha}}d\sigma\\
& \leq  h\int_{0}^{1}M_{0}D_{1}||\tilde{u}_{n}(\sigma,x)-\tilde{u}_{n-1}(\sigma,x)||_{H^{\alpha}}d\sigma \leq\beta||\tilde{u}_{n}-\tilde{u}_{n-1}||_{c},\quad{\beta=\kappa M_{0}D_{1},}\\
\end{aligned}
\end{equation*}
where $||\cdot||_{c}$ is the maximum norm for continuous functions
defined as $
||w||_{c}=\max_{\tau\in[0,1]}||w(\tau,x)||_{H^{\alpha}}$ for
  a continuous   function $w(\tau,x)$ with $\tau $ on
$[0,1]$.  Hence, one arrives at
\begin{equation*}
||\tilde{u}_{n+1}(\tau,x)-\tilde{u}_{n}(\tau,x)||_{c}\leq\beta||\tilde{u}_{n}(\tau,x)-\tilde{u}_{n-1}(\tau,x)||_{c},
\end{equation*}
and
\begin{equation}\label{recur3}
||\tilde{u}_{n+1}(\tau,x)-\tilde{u}_{n}(\tau,x)||_{c}\leq\beta^{n}||\tilde{u}_{1}(\tau,x)-u_{0}(x)||_{c}\leq\beta^{n}R,\quad
n=0,1,\ldots.
\end{equation}
  Weierstrass $M$-test and the fact that $\beta<1$  yield  the
 uniformly convergence of
$\sum_{n=0}^{\infty}(\tilde{u}_{n+1}(\tau,x)-\tilde{u}_{n}(\tau,x))$.

If $\tilde{v}(\tau,x)$ is  another solution of the method, then it
is obtained that
 \begin{equation*}
||\tilde{u}(\tau,x)-\tilde{v}(\tau,x)||_{H^{\alpha}}\leq
h\int_{0}^{1}||\bar{A}_{\tau,\sigma}(\mathrm{i}\mathcal{A})\mathrm{i}
\big(g(\tilde{u}(\sigma,x))-g(\tilde{v}(\sigma,x))\big)||_{H^{\alpha}}d\sigma\leq\beta||\tilde{u}-\tilde{v}||_{c},
\end{equation*}
and $
\norm{\tilde{u}-\tilde{v}}_{c}\leq\beta||\tilde{u}-\tilde{v}||_{c}.
$ This leads to   $||\tilde{u}-\tilde{v}||_{c}=0$  and then
$\tilde{u}=\tilde{v}$.

In order to prove that  $\tilde{u}(\tau,x)$  is smoothly dependent
of $h$, we need  to prove that the series
$\left\{\frac{\partial^{k}\tilde{u}_{n}}{\partial
h^{k}}(\tau,x)\right\}_{n=0}^{\infty}$  is  uniformly convergent for
$k\geq1$. It follows from   \eqref{recur} that
\begin{equation}\label{recur2}
\begin{aligned}
\frac{\partial \tilde{u}_{n+1}}{\partial h}(\tau,x)=&\tau
\mathrm{i}\mathcal{A}e^{\tau h \mathrm{i}\mathcal{A}}u_0+\tau
\int_{0}^{1}\Big(\bar{A}_{\tau,\sigma}(\mathrm{i}\mathcal{A})+h\frac{\partial
\bar{A}_{\tau,\sigma}}{\partial
h}\Big)\mathrm{i} g(\tilde{u}_{n}(\sigma,x))d\sigma \\
&+\tau
h\int_{0}^{1}\bar{A}_{\tau,\sigma}(\mathrm{i}\mathcal{A})\mathrm{i}
g^{(1)}(\tilde{u}_{n}(\sigma,x))\frac{\partial
\tilde{u}_{n}}{\partial h}(\sigma,x)d\sigma, \end{aligned}
\end{equation}
which yields
\begin{equation*}
\norm{\frac{\partial \tilde{u}_{n+1}}{\partial
h}}_{c}\leq\alpha+\beta\norm{\frac{\partial \tilde{u}_{n}}{\partial
h}}_{c},\quad \alpha=C_1+(M_{0}+\kappa M_{1})D_{0}.
\end{equation*}
Therefore,  it is easy to show that $\left\{\frac{\partial
\tilde{u}_{n}}{\partial h}(\tau,x)\right\}_{n=0}^{\infty}$ is
uniformly bounded:
\begin{equation}\label{bound}
\norm{\frac{\partial \tilde{u}_{n}}{\partial
h}}_{c}\leq\alpha(1+\beta+\ldots+\beta^{n-1})\leq\frac{\alpha}{1-\beta}=C^{*},\quad
n=0,1,\ldots.
\end{equation}
Moreover, in the light of  \eqref{recur3}--\eqref{bound}, one
obtains
\begin{equation*}
\begin{aligned}
&\norm{\frac{\partial \tilde{u}_{n+1}}{\partial h}-\frac{\partial
\tilde{u}_{n}}{\partial h}}_{c} \leq
 \tau \int_{0}^{1}(M_{0}+hM_{1})\norm{g(\tilde{u}_{n}(\sigma,x))-g(\tilde{u}_{n-1}(\sigma,x))}_{H^{\alpha}}d\sigma\\
&+\tau
h\int_{0}^{1}M_{0}\Big(\norm{\big(g^{(1)}(\tilde{u}_{n}(\sigma,x))-g^{(1)}(\tilde{u}_{n-1}(\sigma,x))\big)\frac{\partial
\tilde{u}_{n}}{\partial h}(\sigma,x)}_{H^{\alpha}}\\&
+\norm{g^{(1)}(\tilde{u}_{n-1}(\sigma,x))\Big(\frac{\partial
\tilde{u}_{n}}{\partial h}(\sigma,x)-\frac{\partial \tilde{u}_{n-1}}
{\partial h}(\sigma,x)\Big)}_{H^{\alpha}}\Big)d\sigma\\
&\leq\gamma\beta^{n-1}+\beta\norm{\frac{\partial \tilde{u}_{n}}{\partial h}-\frac{\partial \tilde{u}_{n-1}}{\partial h}}_{c},\\
\end{aligned}
\end{equation*}
where $\gamma=(M_{0}D_{1}+\kappa M_{1}D_{1}+\kappa
M_{0}L_{2}C^{*})R,$
 and $L_{2}$ is a constant satisfying
\begin{equation*}
||g^{(1)}(y)-g^{(1)}(z)||_{H^{\alpha}}\leq
L_{2}||y-z||_{H^{\alpha}}\quad\text{for\ \ $y,z\in
B(\bar{y}_{0},R)$}.
\end{equation*}
By induction we get
\begin{equation*}
\norm{\frac{\partial \tilde{u}_{n+1}}{\partial h}-\frac{\partial
\tilde{u}_{n}}{\partial h}}_{c}\leq
n\gamma\beta^{n-1}+\beta^{n}C^{*},\quad n=1,2,\ldots.
\end{equation*}
Thus,  $\left\{\frac{\partial \tilde{u}_{n}}{\partial
h}(\tau,x)\right\}_{n=0}^{\infty}$ is uniformly convergent.

 In a similar way,  it can be proved  that other function series
$\left\{\frac{\partial^{k}\tilde{u}_{n}}{\partial
h^{k}}(\tau,x)\right\}_{n=0}^{\infty}$ for $k\geq2$ are uniformly
convergent. Therefore, $\tilde{u}(\tau,x)$ is  smoothly dependent on
$h$.
 \hfill  $\square$
\end{proof}

\section{$h$-dependent regularity of the methods}\label{sec:regular}
 In this section, we study the regularity of the methods. In this paper,  a function $w(\tau,x)$ is called as
$h$-dependent regular if it can be expanded as
\begin{equation*}
w(\tau,x)=\sum_{n=0}^{r-1}w^{[n]}(\tau,x)h^{n}+\mathcal{O}(h^{r}),
\end{equation*}
where
$$w^{[n]}(\tau,x)=\frac{1}{n!}\frac{\partial^{n}w(\tau,x)}{\partial
h^{n}}|_{h=0}\in S_n:=
\textmd{span}^x\left\{1,\tau,\ldots,\tau^n\right\}.$$

For the $P_{\tau,\sigma}$ given in Lemma \ref{proj lem}, we need the
following property.
\begin{prop}\label{prop2}
Assume that the Taylor expansion of $P_{\tau,\sigma}$ with respect
to $h$ at zero is
\begin{equation}\label{Pexpand}
P_{\tau,\sigma}=\sum_{n=0}^{r-1}P_{\tau,\sigma}^{[n]}h^{n}+\mathcal{O}(h^{r}).
\end{equation}
Then the coefficients $P_{\tau,\sigma}^{[n]}$ satisfy
\begin{equation*}
\langle P_{\tau,\sigma}^{[n]},g_{m}(\sigma,x)\rangle_{\sigma}=
\left\{\begin{aligned}
&g_{m}(\tau,x) ,{\quad n=0,\quad m=r-1,}\\
&0,\quad{n=1,\ldots,r-1,\quad m=r-1-n,}\\
\end{aligned}\right.
\end{equation*}
for any $g_{m}(\tau,x) \in
\textmd{span}^x\left\{1,\tau,\ldots,\tau^m\right\}$.
\end{prop}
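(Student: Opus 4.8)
The plan is to combine the reproducing property of the kernel $P_{\tau,\sigma}$ with a Taylor expansion in the stepsize $h$, tested against functions that lie in $Y_h(\tau,x)$ for \emph{every} $h$. First I would record that, since $\mathcal{P}_h$ is the $\langle\cdot,\cdot\rangle_\tau$-orthogonal projection onto $Y_h(\tau,x)$, Lemma~\ref{proj lem} gives the reproducing identity $\langle P_{\tau,\sigma},\tilde v(\sigma,x)\rangle_\sigma=\tilde v(\tau,x)$ whenever $\tilde v\in Y_h$. Then I pick arbitrary fixed functions $c_0(x),\dots,c_{r-1}(x)$ (with values in $H^\alpha(\mathbb{T}^d)$) and put $\tilde v(\tau,x)=\sum_{j=0}^{r-1}\varphi_j(\tau h)c_j(x)$; by construction $\tilde v\in Y_h(\tau,x)$ for every $h>0$, so the reproducing identity holds identically in $h$. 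Using $\varphi_j(\tau h)=\sum_{m=0}^{r-1}\frac{\varphi_j^{(m)}(0)}{m!}\tau^m h^m+\mathcal{O}(h^r)$ gives $\tilde v(\tau,x)=\sum_{m=0}^{r-1}d_m(x)\tau^m h^m+\mathcal{O}(h^r)$ with $d_m(x):=\sum_{j=0}^{r-1}\frac{\varphi_j^{(m)}(0)}{m!}c_j(x)$, and each $d_m(x)\tau^m$ lies in $S_m$. Combining this with the expansion \eqref{Pexpand} of $P_{\tau,\sigma}$ and comparing coefficients of $h^N$ for $0\le N\le r-1$, the reproducing identity becomes
\begin{equation*}
\sum_{n=0}^{N}\big\langle P_{\tau,\sigma}^{[n]},\,d_{N-n}(x)\,\sigma^{N-n}\big\rangle_\sigma=d_N(x)\,\tau^N ,\qquad 0\le N\le r-1 .
\end{equation*}

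The remaining work is to decouple the functions $d_m$. I would argue that $c_0,\dots,c_{r-1}$ can be chosen so that $(d_0,\dots,d_{r-1})$ is any prescribed tuple of $H^\alpha$-functions; this is exactly the invertibility of the matrix $\big(\varphi_j^{(m)}(0)/m!\big)_{m,j=0}^{r-1}$, i.e.\ the statement that no nontrivial linear combination of the $\varphi_j$ vanishes to order $r$ at the origin — a nondegeneracy of the basis functions that holds for the concrete choices used later in the paper. Granting it, I fix $m'\in\{0,\dots,r-1\}$, take $d_{m'}(x)=a(x)$ arbitrary and $d_m\equiv 0$ for $m\neq m'$. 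In the relation above only the term with $N-n=m'$ then survives on the left, so the relation at $N=m'$ collapses to $\langle P_{\tau,\sigma}^{[0]},a(x)\sigma^{m'}\rangle_\sigma=a(x)\tau^{m'}$, and, for $1\le n\le r-1-m'$, the relation at $N=m'+n$ collapses to $\langle P_{\tau,\sigma}^{[n]},a(x)\sigma^{m'}\rangle_\sigma=0$ (its right-hand side is $d_{m'+n}\tau^{m'+n}=0$). Letting $a$ and $m'$ vary and using linearity yields $\langle P_{\tau,\sigma}^{[0]},g(\sigma,x)\rangle_\sigma=g(\tau,x)$ for all $g\in S_{r-1}$, and $\langle P_{\tau,\sigma}^{[n]},g(\sigma,x)\rangle_\sigma=0$ for all $g\in S_{r-1-n}$ and $n=1,\dots,r-1$; specializing $g=g_{r-1}$ in the first case and $g=g_{r-1-n}$ in the second is precisely the claimed identities.

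The genuinely delicate step is this decoupling, i.e.\ showing that $(d_0,\dots,d_{r-1})$ can be made arbitrary; once that is in hand, everything else is bookkeeping with finite Taylor expansions. One should also check that the term-by-term comparison in $h$ is legitimate, but this is routine: $\tilde v(\tau,x)$ and $P_{\tau,\sigma}$ depend smoothly on $h$ with remainders that are $\mathcal{O}(h^r)$ uniformly in $\tau,\sigma$, so the identity may be expanded up to order $h^{r-1}$.
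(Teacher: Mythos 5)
Your argument is correct, and it is genuinely different in character from what the paper does: the paper's ``proof'' of Proposition~\ref{prop2} is essentially a citation to the scalar-polynomial analysis in \cite{Li_Wu(na2016)} followed by the observation that the $\textmd{span}^x$ structure lets one carry the identities over coefficientwise, whereas you reconstruct the underlying argument from scratch. Your route --- test the reproducing identity $\langle P_{\tau,\sigma},\tilde v(\sigma,x)\rangle_\sigma=\tilde v(\tau,x)$ on elements $\tilde v(\tau,x)=\sum_j\varphi_j(\tau h)c_j(x)$ that lie in $Y_h$ for every $h$, expand both sides in $h$, and decouple the resulting triangular system by choosing the $c_j$ suitably --- is exactly the mechanism hiding behind the cited reference, and the bookkeeping (only the $k=m'$ term survives, giving the reproducing identity at $n=0$ and annihilation for $1\le n\le r-1-m'$) checks out. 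What your version buys is transparency about the one genuinely nontrivial hypothesis: the invertibility of the matrix $\bigl(\varphi_j^{(m)}(0)/m!\bigr)_{m,j=0}^{r-1}$, i.e.\ that no nontrivial combination of the $\varphi_j$ vanishes to order $r$ at $t=0$. This does \emph{not} follow from the mere linear independence assumed in Definition~\ref{def function space} (take $\varphi_j(t)=t^{r+j}$ for a counterexample in which $Y_h$ degenerates as $h\to0$ and the conclusion can fail), so it is an implicit standing assumption of the paper that you have correctly surfaced; it is satisfied by the concrete choice $\varphi_k(t)=t^k$ used in Section~\ref{sec:experiments}, for which the matrix is the identity. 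The only other point to keep in view is that the smoothness of $P_{\tau,\sigma}$ in $h$ at $h=0$ (needed for the term-by-term comparison, and itself a consequence of the same nondegeneracy via Gram--Schmidt) is taken for granted by the proposition's own hypothesis \eqref{Pexpand}, so you are entitled to it.
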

\begin{proof}According to the analysis in \cite{Li_Wu(na2016)}, we have that
\begin{equation*}
\langle P_{\tau,\sigma}^{[n]},\varphi_{m}(\sigma)\rangle_{\sigma}=
\left\{\begin{aligned}
&\varphi_{m}(\tau) ,{\quad n=0,\quad m=r-1,}\\
&0,\quad{n=1,\ldots,r-1,\quad m=r-1-n,}\\
\end{aligned}\right.
\end{equation*}
for any $\varphi_{m}\in P_{m}([0,1]),$  where  $P_{m}([0,1])$
consists of polynomials of degrees $\leq m$ on $[0,1]$. From the
definition of the space
$\text{span}\left\{1,\tau,\ldots,\tau^m\right\}$, it is clear that
the result is true.
 \hfill  $\square$
\end{proof}

\begin{lem}\label{lemmare}
The ECMr  method  \eqref{ECM} gives an  $h$-dependent regular
 function $\tilde{u}(\tau,x)$.
\end{lem}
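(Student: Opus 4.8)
The plan is to establish the $h$-dependent regularity of $\tilde{u}(\tau,x)$ by a bootstrap argument based on the fixed-point iteration \eqref{recur} already used in the proof of Theorem \ref{eus}. Since Theorem \ref{eus} guarantees that $\tilde{u}(\tau,x)$ is smooth in $h$ on $[0,\kappa]$, it has a Taylor expansion $\tilde{u}(\tau,x)=\sum_{n=0}^{r-1}\tilde{u}^{[n]}(\tau,x)h^{n}+\mathcal{O}(h^{r})$ with $\tilde{u}^{[n]}(\tau,x)=\frac{1}{n!}\partial_h^n\tilde{u}(\tau,x)|_{h=0}$; the uniform bounds \eqref{bound} (and their analogues for higher derivatives) show the remainder is genuinely $\mathcal{O}(h^{r})$ with a constant independent of $\tau$. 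So the only thing left to prove is the \emph{structural} claim that each coefficient $\tilde{u}^{[n]}(\tau,x)$ lies in $S_n=\textmd{span}^x\{1,\tau,\ldots,\tau^n\}$.

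First I would expand every ingredient of \eqref{ECM} in powers of $h$: write $e^{\tau h \mathrm{i}\mathcal{A}}u_0(x)=\sum_{k\geq0}\frac{(\tau h)^k}{k!}(\mathrm{i}\mathcal{A})^k u_0(x)$, so its degree-$n$ coefficient in $h$ is $\frac{\tau^n}{n!}(\mathrm{i}\mathcal{A})^n u_0(x)$, which is a polynomial in $\tau$ of degree exactly $n$ with coefficients in $H^{\alpha}$, hence lies in $S_n$ (here Assumption \ref{ass4} ensures the coefficients are well-defined elements of $H^{\alpha}$). Next, using the explicit formula \eqref{Aexplicit}, $\bar{A}_{\tau,\sigma}(\mathrm{i}\mathcal{A})=\int_0^1 e^{(1-\xi)\tau h\mathrm{i}\mathcal{A}}P_{\xi\tau,\sigma}\,d\xi$; combining the Taylor expansion of the exponential with \eqref{Pexpand} and performing the $\xi$-integration, one checks that the coefficient of $h^n$ in $\tau h\,\bar{A}_{\tau,\sigma}(\mathrm{i}\mathcal{A})$ is a polynomial in $\tau$ of degree $\leq n$ whose $\sigma$-dependence is through the $P^{[m]}_{\xi\tau,\sigma}$. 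Then I would substitute the (inductively assumed) expansion of $g(\tilde{u}(\sigma,x))$ — which is smooth in $h$ because $g$ is smooth and $\tilde{u}$ is smooth, with $h$-coefficients $(g\circ\tilde{u})^{[m]}(\sigma,x)\in S_m$ by the induction hypothesis on $\tilde{u}$ together with the chain rule — into \eqref{ECM}, collect the coefficient of $h^{n}$, and read off that $\tilde{u}^{[n]}(\tau,x)$ is a sum of: the term $\frac{\tau^n}{n!}(\mathrm{i}\mathcal{A})^n u_0$; and terms of the form $\tau^{a}\int_0^1(\text{$\sigma$-function})\cdot(g\circ\tilde u)^{[b]}(\sigma,x)\,d\sigma$ with $a+b\le n-1$ after accounting for the explicit factor $\tau h$. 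Each such term is $\tau^a$ times an element of $H^{\alpha}$, hence lies in $S_a\subseteq S_n$; summing gives $\tilde{u}^{[n]}(\tau,x)\in S_n$, closing the induction on $n$ from $n=0$ up to $n=r-1$.

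The main obstacle I anticipate is bookkeeping the degrees correctly: one must verify that the extra factor $\tau$ in front of the integral in \eqref{ECM}, together with the $\xi$-integration (which produces polynomials in $\tau$ via $\int_0^1(1-\xi)^k\xi^j\,d\xi$ being a constant times nothing $\tau$-dependent, but the $(\xi\tau)$ argument of $P$ does contribute $\tau$-powers), does not push the $\tau$-degree of the $h^n$-coefficient beyond $n$. Concretely, if $g(\tilde u(\sigma,x))$ contributes an $h^b$-term that is a polynomial of degree $\le b$ in $\sigma$, and $\bar A_{\tau,\sigma}$ contributes an $h^c$-term that is (after the $\xi$-integral) a polynomial of degree $\le c$ in $\tau$ and some fixed polynomial in $\sigma$, then the product $\tau\cdot h\cdot h^c\cdot h^b$ has total $h$-power $n=b+c+1$ and $\tau$-degree $\le c+1\le n$, as required. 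It is precisely here that Proposition \ref{prop2}-type degree counting (the space $\textmd{span}^x\{1,\tau,\ldots,\tau^m\}$ being closed under the relevant operations) does the essential work, and I would invoke the analysis of \cite{Li_Wu(na2016)} for the purely polynomial part, adding only the observation that tensoring with fixed $H^{\alpha}$-valued coefficients changes nothing in the degree count.
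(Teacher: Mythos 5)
Your proposal is correct and follows essentially the same route as the paper: use the smoothness in $h$ from Theorem \ref{eus} to justify the Taylor expansion, expand $e^{\tau h \mathrm{i}\mathcal{A}}u_0$, $\bar{A}_{\tau,\sigma}(\mathrm{i}\mathcal{A})$ and the nonlinearity in powers of $h$, and induct on the coefficient index with the same $\tau$-degree bookkeeping (the explicit factor $\tau h$ plus a degree-$\le c$ contribution from the $h^c$-coefficient of the kernel). The only cosmetic difference is that the paper handles the nonlinearity by Taylor-expanding $f(\tilde{u})$ about $u_0$ in powers of $\delta=\tilde{u}-u_0=\mathcal{O}(h)$, whereas you invoke the chain rule for the composition $g\circ\tilde{u}$ directly; both reduce to the same inductive degree count.
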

\begin{proof}
By the result given in Theorem \ref{eus}, we know that
$\tilde{u}(\tau,x)$ can be expanded with respect to $h$ at zero as $
\tilde{u}(\tau,x)=\sum_{m=0}^{r-1}\tilde{u}^{[m]}(\tau,x)h^{m}+\mathcal{O}(h^{r}).
$ From the definition of $\bar{A}_{\tau,\sigma}(\mathrm{i}
\mathcal{A})$ given in \eqref{Aexplicit}, it follows that
$\bar{A}_{\tau,\sigma}(\mathrm{i} \mathcal{A})$ is $h$-dependent
regular, i.e.,
\begin{equation*}
\begin{aligned}
&\bar{A}_{\tau,\sigma}(\mathrm{i} \mathcal{A})
 =\sum_{k=0}^{r-1} \bar{A}^{[k]}_{\tau,\sigma}(\mathrm{i}
 \mathcal{A})
 h^k+\mathcal{O}(h^{r}),
\end{aligned}
\end{equation*}
where $\bar{A}^{[k]}_{\tau,\sigma}(\mathrm{i} \mathcal{A})\in S_k.$
Denote $\delta=\tilde{u}(\tau,x)-u_{0}(x)$  and then we have
\begin{equation*}
\delta=
\tilde{u}^{[0]}(\tau,x)-u_{0}(x)+\mathcal{O}(h)=u_{0}(x)-u_{0}(x)+\mathcal{O}(h)=\mathcal{O}(h).
\end{equation*}
We now return to the scheme \eqref{ECM} of  ECMr method. Expanding
$f(\tilde{u}(\tau,x))$ at $u_{0}(x)$ and inserting  the above
equalities into the scheme, one gets
\begin{equation}\label{comp}
\begin{aligned}
\sum_{m=0}^{r-1}\tilde{u}^{[m]}(\tau,x)h^{m}=&\sum_{m=0}^{r-1}
\frac{(\tau   \mathrm{i}  \mathcal{A})^mu_{0}(x)}{m!} h^{m}
+\mathrm{i}\tau h\int_{0}^{1}\sum_{k=0}^{r-1}
\bar{A}^{[k]}_{\tau,\sigma}(\mathrm{i} \mathcal{A})
 h^k\\
 &\sum_{n=0}^{r-1}\frac{1}{n!}f^{(n)}(u_{0}(x))
 (\underbrace{\delta,\ldots,\delta}_{n-fold})d\sigma+\mathcal{O}(h^{r}).
 \end{aligned}
\end{equation}
In what follows, it is  needed  only  to prove by induction that
$$\tilde{u}^{[m]}(\tau,x)\in S_m\ \ \ \textmd{for}\  \ \ m=0,\ldots,r-1.$$

Firstly, $\tilde{u}^{[0]}(\tau,x)=u_{0}(x)\in S_0$. Assume that
$\tilde{u}^{[n]}(\tau,x)\in S_{n}$ for $n=0,1,\ldots,m$. Comparing
the coefficients of $h^{m+1}$ on both sides of \eqref{comp} yields
\begin{equation*}
\begin{aligned}
&\tilde{u}^{[m+1]}(\tau,x)=\frac{(\tau  \mathrm{i}
\mathcal{A})^{m+1} }{(m+1)!} u_{0}(x) +\mathrm{i}\tau\sum_{k+n=m}
 \int_{0}^{1}
\bar{A}^{[k]}_{\tau,\sigma}(\mathrm{i} \mathcal{A})
\frac{1}{n!}f^{(n)}(u_{0}(x)) d\sigma,
\end{aligned}
\end{equation*}
 which confirms that $
\tilde{u}^{[m+1]}(\tau,x)  \in S_{m+1}.$
 \hfill  $\square$
\end{proof}

About  $h$-dependent  regular functions,  we have the following
property which will be used in the  remainder  of this paper.

\begin{lem}\label{lemmaph}
Given a regular function $w$ and an $h$-independent  sufficiently
smooth  function $g$, the composition (if exists) is regular.
Moreover,  the difference between $w$ and its projection satisfies
\begin{equation*}
\mathcal{P}_{h}w(\tau,x)-w(\tau,x)=\mathcal{O}(h^{r}).
\end{equation*}
\end{lem}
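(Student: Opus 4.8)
The plan is to prove the two assertions of Lemma~\ref{lemmaph} separately, reducing everything to the structural facts already established. For the first assertion (closure under composition with a smooth $h$-independent function $g$), I would start from the regular expansion $w(\tau,x)=\sum_{n=0}^{r-1}w^{[n]}(\tau,x)h^{n}+\mathcal{O}(h^{r})$ with $w^{[n]}(\tau,x)\in S_n=\textmd{span}^x\{1,\tau,\ldots,\tau^n\}$, and simply Taylor-expand $g(w(\tau,x))$ around $w^{[0]}(\tau,x)$ (which is $h$-independent since $w^{[0]}\in S_0$). Collecting powers of $h$ up to $h^{r-1}$, each coefficient is a finite sum of products of the $w^{[n]}$'s paired with derivatives $g^{(k)}(w^{[0]})$; since $S_m\cdot S_n\subseteq S_{m+n}$ and only index combinations with total weight $\leq r-1$ survive, the coefficient of $h^m$ lies in $S_m$, and the remainder is $\mathcal{O}(h^{r})$ by smoothness of $g$ together with the boundedness of $w$ on $[0,1]$. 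This gives regularity of $g\circ w$.

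For the second assertion I would expand $w$ as above and use linearity of $\mathcal{P}_h$, so that it suffices to estimate $\mathcal{P}_h\big(w^{[n]}(\tau,x)h^{n}\big)-w^{[n]}(\tau,x)h^{n}$ for each $n\leq r-1$, plus the contribution of the $\mathcal{O}(h^{r})$ tail. The tail is handled by Proposition~\ref{prop1}: $\mathcal{P}_h$ is built from the orthonormal basis $\{\tilde\psi_j\}$ and from $\langle P_{\tau,\sigma},\cdot\rangle_\sigma$, so it is bounded uniformly on $H^\alpha$, hence maps $\mathcal{O}(h^{r})$ to $\mathcal{O}(h^{r})$. For the principal terms, write $\mathcal{P}_h(w^{[n]}h^{n})=\langle P_{\tau,\sigma},w^{[n]}(\sigma,x)\rangle_\sigma\,h^{n}$ is not quite right since $P_{\tau,\sigma}$ itself depends on $h$; instead I substitute the expansion \eqref{Pexpand}, $P_{\tau,\sigma}=\sum_{k=0}^{r-1}P_{\tau,\sigma}^{[k]}h^{k}+\mathcal{O}(h^{r})$, multiply, and keep terms of total degree $\leq r-1$ in $h$. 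The key algebraic input is Proposition~\ref{prop2}: since $w^{[n]}(\cdot,x)\in S_n=\textmd{span}^x\{1,\ldots,\tau^n\}\subseteq \textmd{span}^x\{1,\ldots,\tau^{r-1-k}\}$ whenever $n\leq r-1-k$, the relations $\langle P_{\tau,\sigma}^{[0]},g_m(\sigma,x)\rangle_\sigma=g_m(\tau,x)$ and $\langle P_{\tau,\sigma}^{[k]},g_m(\sigma,x)\rangle_\sigma=0$ for $1\leq k\leq r-1$, $m=r-1-k$, cause all contributions of total $h$-degree $\leq r-1$ except the identity term $P_{\tau,\sigma}^{[0]}$ acting on $w^{[n]}$ to vanish. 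What remains of degree $\leq r-1$ is exactly $\sum_{n=0}^{r-1}w^{[n]}(\tau,x)h^{n}$, i.e.\ $w$ itself up to $\mathcal{O}(h^{r})$, which yields $\mathcal{P}_h w(\tau,x)-w(\tau,x)=\mathcal{O}(h^{r})$.

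The main obstacle is bookkeeping the indices: one must verify that when $P^{[k]}$ meets $w^{[n]}h^{n+k}$ and $n+k\leq r-1$, then automatically $n\leq r-1-k$, so that $w^{[n]}\in S_n\subseteq \textmd{span}^x\{1,\ldots,\tau^{r-1-k}\}$ and Proposition~\ref{prop2} applies in the exact form stated there (note Proposition~\ref{prop2} is stated for $m=r-1-k$, not for general $m$, so the inclusion $S_n\subseteq S_{r-1-k}$ and linearity of the pairing in its second argument are what makes the annihilation go through). A secondary point is that $\mathcal{P}_h$ as written acts on continuous two-dimensional vector functions of $\tau$, so each scalar estimate must be applied componentwise; this is harmless but should be mentioned. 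I would close by remarking that the same argument, applied to $\tilde z_t$ (whose regularity follows from Lemma~\ref{lemmare} after differentiating the expansion of $\tilde u$), gives the estimates $\tilde z_t=\mathcal{P}_h\tilde z_t+\mathcal{O}(h^{r})$ and $g(\tilde z)-\mathcal{P}_h g(\tilde z)=\mathcal{O}(h^{r})$ invoked in the proof of Theorem~\ref{EP}.
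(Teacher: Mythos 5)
Your proposal is correct and follows essentially the same route as the paper: the composition statement is obtained by expanding $g(w)$ in powers of $h$ and using $S_m\cdot S_n\subseteq S_{m+n}$, and the projection estimate by substituting the expansions of both $P_{\tau,\sigma}$ and $w$, collecting terms of degree $\le r-1$, and annihilating the cross terms via Proposition~\ref{prop2} exactly as you describe (your index bookkeeping $n\le r-1-k$ is the same observation the paper uses implicitly). One small correction: the uniform boundedness of $\mathcal{P}_h$ needed to absorb the $\mathcal{O}(h^{r})$ tail follows directly from its representation through the orthonormal basis $\{\tilde{\psi}_j\}$, not from Proposition~\ref{prop1}, which concerns functions of $\mathrm{i}h\Delta$; this does not affect the validity of the argument.
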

\begin{proof} For the first result,
assume that $
g(w(\tau,x))=\sum_{n=0}^{r-1}p^{[n]}(\tau,x)h^{n}+\mathcal{O}(h^{r}).
$ Then by differentiating $g(w(\tau,x))$ with respect to $h$ at zero
iteratively and using
$$p^{[n]}(\tau,x)=\frac{1}{n!}\frac{\partial^{n}g(w(\tau,x))}{\partial h^{n}}|_{h=0},
\quad\text{ $\frac{\partial^{n}w(\tau,x)}{\partial h^{n}}|_{h=0}\in
S_n$},\quad {n=0,1,\ldots,r-1,}$$ it can be observed that
$p^{[n]}(\tau,x)\in S_n$   for $n=0,1,\ldots,r-1$.

As for the second statement,  in terms of Proposition \ref{prop2},
we have
\begin{equation*}
\begin{aligned}
&\ \ \ \ \mathcal{P}_{h}w(\tau,x)-w(\tau,x)=\langle P_{\tau,\sigma},w(\sigma,x) \rangle_{\sigma}-w(\tau,x)\\
&=\langle\sum_{n=0}^{r-1}P_{\tau,\sigma}^{[n]}h^{n},\sum_{k=0}^{r-1}w^{[k]}(\sigma,x)h^{k}\rangle_{\sigma}-\sum_{m=0}^{r-1}w^{[m]}(\tau,x)h^{m}+\mathcal{O}(h^{r})\\
&=\sum_{m=0}^{r-1}\big(\sum_{n+k=m}\langle P_{\tau,\sigma}^{[n]},w^{[k]}(\sigma,x)\rangle_{\sigma}-w^{[m]}(\tau,x)\big)h^{m}+\mathcal{O}(h^{r})\\
&=\sum_{m=0}^{r-1}\big(\langle
P_{\tau,\sigma}^{[0]},w^{[m]}(\sigma,x)\rangle_{\sigma}-w^{[m]}(\tau,x)\big)h^{m}+\mathcal{O}(h^{r})=\mathcal{O}(h^{r}).
\end{aligned}
\end{equation*}
 \hfill  $\square$
\end{proof}

Using Lemmas \ref{lemmare} and \ref{lemmaph}, we immediately have
the following result.
\begin{lem}\label{lem new}
For the result $\tilde{u}(\tau,x)$ of  the ECMr method
 \eqref{ECM}, it holds that
\begin{equation*}\begin{aligned}
\mathcal{P}_{h}(f(\tilde{u}(\tau,x)))-f(\tilde{u}(\tau,x))=\mathcal{O}(h^{r}).
\end{aligned}\end{equation*}
\end{lem}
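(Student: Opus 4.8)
The plan is to chain together the two preceding lemmas and the definition of $h$-dependent regularity in exactly the way the statement's placement suggests. First, by Lemma \ref{lemmare}, the ECMr solution $\tilde{u}(\tau,x)$ is $h$-dependent regular, so it admits an expansion $\tilde{u}(\tau,x)=\sum_{n=0}^{r-1}\tilde{u}^{[n]}(\tau,x)h^{n}+\mathcal{O}(h^{r})$ with each coefficient $\tilde{u}^{[n]}(\tau,x)\in S_n$. Next, since the nonlinearity $f(u)=-\lambda|u|^2u$ is $h$-independent and (by Assumptions \ref{ass2}--\ref{ass3}) sufficiently smooth, the first part of Lemma \ref{lemmaph} applies to the composition $g=f$ and $w=\tilde{u}$: the function $\tau\mapsto f(\tilde{u}(\tau,x))$ is again $h$-dependent regular. (One should note that in the present $\mathbb{C}$-valued setting, $f$ acting on $p+\mathrm{i}q$ corresponds to the smooth real map $g$ in \eqref{new sch}, so ``sufficiently smooth'' is exactly Assumption \ref{ass2}, and the composition indeed exists for all $\tau\in[0,1]$ because $\tilde u(\tau,x)\in B(\bar u_0,R)$ by the fixed-point argument in the proof of Theorem \ref{eus}.)

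Having established that $f(\tilde{u}(\cdot,x))$ is $h$-dependent regular, I would simply invoke the second part of Lemma \ref{lemmaph} — the projection estimate $\mathcal{P}_{h}w(\tau,x)-w(\tau,x)=\mathcal{O}(h^{r})$ valid for any $h$-dependent regular $w$ — with $w(\tau,x)=f(\tilde{u}(\tau,x))$. This yields
\begin{equation*}
\mathcal{P}_{h}\big(f(\tilde{u}(\tau,x))\big)-f(\tilde{u}(\tau,x))=\mathcal{O}(h^{r}),
\end{equation*}
which is precisely the assertion of Lemma \ref{lem new}. The whole argument is a two-line deduction once the two input lemmas are in hand.

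The only point requiring a little care — and the place I would expect a referee to push back — is the compatibility of the hypotheses of Lemma \ref{lemmaph} with the function actually produced by the method: Lemma \ref{lemmaph} is stated for ``a regular function $w$ and an $h$-independent sufficiently smooth function $g$,'' and one must be sure that $\tilde{u}(\tau,x)$ genuinely meets the definition of $h$-dependent regularity used there (in particular the membership $\partial_h^n\tilde u|_{h=0}\in S_n$), which is exactly what Lemma \ref{lemmare} guarantees, and that the $\mathcal{O}(h^{r})$ remainders are uniform in $x$ in the $H^{\alpha}$ norm — this uniformity follows from the boundedness constants $M_k,C_k,D_n$ introduced in \eqref{MC} and Assumption \ref{ass3}, together with the uniform bounds on $\partial_h^k\tilde u_n$ obtained in the proof of Theorem \ref{eus}. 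Beyond flagging this, no genuine obstacle remains; the statement is an immediate corollary, as the text ``Using Lemmas \ref{lemmare} and \ref{lemmaph}, we immediately have'' already signals.
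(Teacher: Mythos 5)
Your proposal is correct and follows exactly the route the paper intends: the paper states this lemma as an immediate consequence of Lemmas \ref{lemmare} and \ref{lemmaph} (regularity of $\tilde{u}$, regularity of the composition $f(\tilde{u})$, then the projection estimate), which is precisely your two-step deduction.
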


\section{Convergence}\label{sec:convergnce}
Before discussing the convergence of the ECMr methods, we  need the
following assumption and Gronwall's lemma, which are useful for our
analysis.
\begin{assum}\label{assumption}
It is assumed that    $f$ is  Lipschitz-continuous, i.e., there
exists $L
> 0$ such that
\begin{equation*}
\begin{aligned}
&||f (y)-f(z)||_{H^{\alpha}}\leq L||y-z||_{H^{\alpha}}\\
\end{aligned}
\end{equation*}
for all $y,z\in H^{\alpha}(\mathbb{T}^d)$ satisfying $y,z \in
B(\bar{u}_{0},R)$.
\end{assum}

\begin{lem}
\label{Gronwall's lemma}  (Gronwall's lemma) Let $\mu$ be positive
and $a_k, b_k\ (k=0,1,2,\cdots)$ be nonnegative and satisfy
$$a_k\leq(1+\mu\Delta t)a_{k-1}+\Delta tb_k,\quad k=1,2,3,\ldots,$$
then
$$a_k\leq\exp(\mu k\Delta t)\Big(a_0+\Delta t\sum\limits_{m=1}^kb_m\Big),\quad k=1,2,3,\ldots.$$
\end{lem}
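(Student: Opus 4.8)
The plan is to prove the claim by iterating the one-step recursion and then bounding the resulting weights by an exponential. First I would unroll the hypothesis $a_k\leq(1+\mu\Delta t)a_{k-1}+\Delta t b_k$ successively down to $a_0$: applying it to $a_{k-1}$, then to $a_{k-2}$, and so on, yields by a straightforward induction on $k$ the estimate
$$a_k\leq(1+\mu\Delta t)^k a_0+\Delta t\sum_{m=1}^k(1+\mu\Delta t)^{k-m}b_m.$$
The inductive step is immediate: assuming the displayed bound for index $k-1$, substitute it into $a_k\leq(1+\mu\Delta t)a_{k-1}+\Delta t b_k$ and collect terms, using that all $a_m$ and $b_m$ are nonnegative and that $1+\mu\Delta t>0$, so that the inequality is preserved under multiplication by $1+\mu\Delta t$.

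Next I would replace each weight $(1+\mu\Delta t)^{k-m}$, for which $0\leq k-m\leq k$, by the uniform upper bound $(1+\mu\Delta t)^k$; this is legitimate because $1+\mu\Delta t\geq 1$ and the $b_m$ are nonnegative, so enlarging the weights only increases the right-hand side. Finally, invoking the elementary scalar inequality $1+x\leq e^x$ (valid for all real $x$, here with $x=\mu\Delta t>0$) gives $(1+\mu\Delta t)^k\leq e^{\mu k\Delta t}$, and substituting this into the previous estimate produces
$$a_k\leq e^{\mu k\Delta t}\Big(a_0+\Delta t\sum_{m=1}^k b_m\Big),$$
which is precisely the asserted conclusion.

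There is essentially no serious obstacle here: the argument is a textbook discrete Gronwall estimate, and the only points requiring any care are to carry the nonnegativity hypotheses through the induction so that all the inequalities remain monotone, and to use the scalar bound $1+x\leq e^x$. One may also observe that the degenerate case causes no trouble, since $\Delta t$ is assumed positive and, for $k=0$, both sides reduce to $a_0$.
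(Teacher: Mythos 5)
Your proof is correct: the unrolled estimate $a_k\leq(1+\mu\Delta t)^k a_0+\Delta t\sum_{m=1}^k(1+\mu\Delta t)^{k-m}b_m$ follows by the induction you describe, and the passage to $e^{\mu k\Delta t}$ via $1+x\leq e^x$ together with the nonnegativity of $a_0$ and the $b_m$ gives exactly the stated bound. The paper itself states this discrete Gronwall lemma without proof (it is quoted as a standard auxiliary result), so there is no proof to compare against; your argument is the standard one and fills that gap correctly, the only implicit assumption being $\Delta t\geq 0$, which you note and which holds in context since $\Delta t$ is a time step.
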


In what follows, we omit  $x$ in the expressions for brevity.

Let $e_{n}$  denote the difference between the numerical and exact
solutions at $t_n$, $E_{n,\tau}$  the difference at $t_n+ \tau h$,
 namely,
\begin{equation*}
e_{n}=u_{n}-u(t_{n}),\ E_{n,\tau}=\tilde{u} (t_n+ \tau h )-u (t_n+ \tau h ).\ \label{en}%
\end{equation*}
Then we present the following convergence result.
\begin{theo}\label{thm2}
Under all the assumptions given in this paper, if the time stepsize
$h$ satisfies $0<h<\frac{1}{2M_0L}$, we have
\begin{equation*}
\begin{aligned}
\norm{e_n}_{H^{\alpha}}\leq\exp(2T M_0L)\big(  2 M_0L\bar{C}_1T h
+\bar{C}_2T\big)h^{2r},
\end{aligned}
\end{equation*}
where $L$ is the Lipschitz constant of Assumption \ref{assumption},
$M_0$ is defined in \eqref{MC}, and   $\bar{C}_1$ and $\bar{C}_2$
are independent of $n$
 and $h$.

\end{theo}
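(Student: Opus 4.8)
The plan is to follow the standard Lady Windermere's fan / one-step-error-plus-Gronwall strategy for establishing global convergence from local truncation estimates. First I would set up the local error: inserting the exact solution $u(t_n+\tau h)$ into the ECMr scheme \eqref{ECM} and subtracting, one compares the continuous Duhamel representation \eqref{Duhamel formu} of the exact flow with the numerical one. The key is that the numerical scheme replaces $f(\tilde u(\sigma,x))$ by its projection $\mathcal{P}_h f(\tilde u(\sigma,x))$ onto $Y_h$, so the defect is governed by how well the projection-quadrature reproduces the integral $\int_0^1 \bar A_{\tau,\sigma}(\mathrm{i}\mathcal{A}) f(u(t_n+\sigma h))\,d\sigma$ against the exact one. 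Using Proposition \ref{prop2} (the orthogonality relations for $P_{\tau,\sigma}^{[n]}$ up to order $r-1$) together with the $h$-dependent regularity of $\tilde u$ (Lemma \ref{lemmare}) and of compositions (Lemma \ref{lemmaph}), the projection error $\mathcal{P}_h f - f$ is $\mathcal{O}(h^r)$, and because this error is paired with a factor that is itself $\mathcal{O}(h^r)$ after exploiting the moment conditions (the quadrature has order $2r$ in the collocation sense), one obtains a local error $E_{n,\tau}$ contribution of size $\bar C\, h^{2r+1}$ at each step, where $\bar C$ collects $M_0$, the bounds $D_n$, and smoothness constants of the exact solution; this is where $\bar C_1,\bar C_2$ enter.

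Next I would convert the local statement into the recursion for $e_n$. Taking $\tau=1$ in the relation for $E_{n,\tau}$ gives $e_{n+1}=\tilde u(t_n+h)-u(t_n+h)$, and the scheme's dependence on the starting value $u_n$ versus $u(t_n)$ is linear in the homogeneous part $e^{h\mathrm{i}\mathcal{A}}$, which is an isometry on $H^\alpha$ by Proposition \ref{prop1}, and Lipschitz in the nonlinear part via Assumption \ref{assumption} and the bound $M_0$ from \eqref{MC}. Concretely one derives, for the perturbed iteration,
\begin{equation*}
\norm{E_{n,\tau}}_{H^\alpha}\le \norm{e_n}_{H^\alpha}+\tau h M_0 L\int_0^1\norm{E_{n,\sigma}}_{H^\alpha}\,d\sigma+\bar C\,h^{2r+1},
\end{equation*}
and a short Picard/fixed-point argument in the $\norm{\cdot}_c$ norm (exactly as in the proof of Theorem \ref{eus}, using $h M_0 L<\tfrac12$) bounds $\max_\tau\norm{E_{n,\tau}}_{H^\alpha}$ in terms of $\norm{e_n}_{H^\alpha}$ with a constant $(1-hM_0L)^{-1}\le 2$, yielding $\norm{e_{n+1}}_{H^\alpha}\le(1+2hM_0L)\norm{e_n}_{H^\alpha}+\bar C' h^{2r+1}$.

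Then I would apply Lemma \ref{Gronwall's lemma} with $\Delta t=h$, $\mu=2M_0L$, $a_n=\norm{e_n}_{H^\alpha}$, and $b_n$ absorbing the $\bar C' h^{2r}$ term; summing $n$ steps up to $nh\le T$ gives $\norm{e_n}_{H^\alpha}\le\exp(2TM_0L)\big(a_0+T\bar C' h^{2r}\big)$, and since $e_0=0$ this is exactly the claimed bound after renaming constants, with the splitting into $2M_0L\bar C_1 Th$ and $\bar C_2 T$ reflecting the two sources in the one-step estimate (the part that passes through one extra application of the Lipschitz-bounded nonlinearity, carrying the extra $h$, versus the part entering directly). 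The main obstacle I anticipate is the careful bookkeeping in the local error step: one must show the defect is genuinely $\mathcal{O}(h^{2r+1})$ rather than merely $\mathcal{O}(h^{r+1})$, which requires pairing the $\mathcal{O}(h^r)$ projection remainder $f-\mathcal{P}_h f$ against $\tilde z_t$ (or equivalently using the superconvergence/moment structure of $P_{\tau,\sigma}$ from Proposition \ref{prop2}) in the same spirit as the energy-error argument in Theorem \ref{EP}, and to track that all constants remain uniform in $n$ and $h$ under Assumptions \ref{ass1}--\ref{ass4} and the stepsize restriction; the Gronwall and fixed-point parts are then routine.
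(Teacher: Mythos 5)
Your proposal follows essentially the same route as the paper's proof: insert the exact solution into the scheme, show the defects $\triangle_{n,\tau}$, $\delta_{n+1}$ are $\mathcal{O}(h^{2r+1})$ by pairing the $\mathcal{O}(h^r)$ projection remainder $f-\mathcal{P}_h f$ against the projected exponential (exploiting the orthogonality in Proposition \ref{prop2}, just as in the energy estimate), then bound $\norm{E_{n,\tau}}_{H^{\alpha}}\le 2\norm{e_n}_{H^{\alpha}}+2\norm{\triangle_{n,\tau}}_{H^{\alpha}}$ under $hM_0L<\tfrac12$ and close with Gronwall's lemma at $\mu=2M_0L$. You also correctly identify both the key subtlety (why the defect is $\mathcal{O}(h^{2r+1})$ rather than $\mathcal{O}(h^{r+1})$) and the origin of the two constants $\bar C_1,\bar C_2$, so the argument matches the paper's in all essentials.
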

\begin{proof}
Inserting the exact solution into the numerical scheme \eqref{ECM}
gives
\begin{equation}\label{so1-PECMr}
\left\{\begin{aligned} u(t_n+\tau h)=&e^{\tau h \mathrm{i}
\mathcal{A}}u(t_{n}) + \mathrm{i}\tau h \int_{0}^1
\bar{A}_{\tau,\sigma}(\mathrm{i} \mathcal{A}) f(u(t_n+\tau\sigma h))
d\sigma+\triangle_{n,\tau},\\
u(t_{n+1})=&e^{ h \mathrm{i} \mathcal{A}}u(t_{n})+\mathrm{i}h
\int_{0}^1 \bar{A}_{1,\sigma}( \mathrm{i} \mathcal{A})
f(u(t_n+\sigma h))
d\sigma+\delta_{n+1}\\
\end{aligned}\right.
\end{equation}
with the defects $\triangle_{n,\tau}$ and $\delta_{n+1}.$ According
to the Duhamel's formula \eqref{Duhamel formu},  we obtain that
\begin{equation}\label{def con}
\begin{aligned}
\triangle_{n,\tau}&=\mathrm{i}\tau h \int_{0}^1
e^{\mathrm{i}(1-\sigma)\tau h \mathcal{A}}\big( f(u(t_n+\tau\sigma
h)) - \mathcal{P}_{h}f(u(t_n+\tau\sigma
h))\big)d\sigma \\
&=\mathrm{i}\tau h \int_{0}^1 \big(
\mathcal{P}_{h}e^{\mathrm{i}(1-\sigma)\tau h
\mathcal{A}}+\mathcal{O}(h^{r})\big)\big( f(u(t_n+\tau\sigma h))
-\mathcal{P}_{h}f(u(t_n+\tau\sigma h))\big)d\sigma\\
&=\mathcal{O}(h^{2r+1}).
\end{aligned}
\end{equation}
In a similar way, one gets
\begin{equation*}
\begin{aligned}
&\delta_{n+1}=\mathrm{i}  h \int_{0}^1 e^{\mathrm{i}(1-\sigma)  h
\mathcal{A}}\big(f(u(t_n+\sigma h)) -\mathcal{P}_{h}f(u(t_n+\sigma
h))\big)d\sigma=\mathcal{O}(h^{2r+1}).
\end{aligned}
\end{equation*}

Subtracting \eqref{so1-PECMr} from \eqref{ECM} leads to the error
recursions
\begin{equation*}\label{err recu}
\begin{aligned} E_{n,\tau}=&e^{\tau h \mathrm{i}
\mathcal{A}}e_{n}+\tau\mathrm{i}h \int_{0}^1 \bar{A}_{\tau,\sigma}(
\mathrm{i} \mathcal{A}) \big[f(\tilde{u}(t_n+\tau h)) -f(u(t_n+\tau
h)) \big]
d\sigma+\triangle_{n,\tau},\\
e_{n+1}=&e^{ h \mathrm{i} \mathcal{A}}e_{n}+\mathrm{i}h \int_{0}^1
\bar{A}_{1,\sigma}( \mathrm{i} \mathcal{A})
\big[f(\tilde{u}(t_n+\tau h))-f(u(t_n+\tau h)) \big]
d\sigma+\delta_{n+1}.
\end{aligned}
\end{equation*}
 We then  have
\begin{equation}\label{err recu-1111}
\begin{aligned} \norm{E_{n,\tau}}_{H^{\alpha}}\leq&\norm{e_{n}}_{H^{\alpha}}+h
M_0L   \norm{E_{n,\tau}}_{H^{\alpha}}+\norm{\triangle_{n,\tau}}_{H^{\alpha}},\\
\norm{e_{n+1}}_{H^{\alpha}}\leq&\norm{e_{n}}_{H^{\alpha}}+  h M_0L
  \norm{E_{n,\tau}}_{H^{\alpha}}+\norm{\delta_{n+1}}_{H^{\alpha}}.\\
\end{aligned}
\end{equation}
If the time stepsize  is chosen by $1-hM_0L\geq\frac{1}{2},$ then
one has the following result
\begin{equation*}
\begin{aligned}  \norm{E_{n,\tau}}_{H^{\alpha}}\leq&2\norm{e_{n}}_{H^{\alpha}}+2 \norm{\triangle_{n,\tau}}_{H^{\alpha}}.\\
\end{aligned}
\end{equation*}
Inserting this into the second inequality of \eqref{err recu-1111}
yields
\begin{equation*}
\begin{aligned}
\norm{e_{n+1}}_{H^{\alpha}}\leq&\norm{e_{n}}_{H^{\alpha}}+  h M_0L
\Big[2\norm{e_{n}}_{H^{\alpha}}+2\norm{\triangle_{n,\tau}}_{H^{\alpha}}\Big]+\norm{\delta_{n+1}}_{H^{\alpha}},\\
\end{aligned}
\end{equation*}
which gives that
\begin{equation*}
\begin{aligned}
 \norm{e_{n+1}}_{H^{\alpha}}\leq&(1+  2h M_0L)
 \norm{e_{n}} _{H^{\alpha}}  +2h M_0L\norm{\triangle_{n,\tau}} _{H^{\alpha}}+\norm{\delta_{n+1}}_{H^{\alpha}}.\\
\end{aligned}
\end{equation*}
Taking into account the fact that
\begin{equation*}
\begin{aligned}
&\sum_{m=1}^{n}\Big(2h M_0L\norm{\triangle_{n,\tau}}_{H^{\alpha}}
+\norm{\delta_{n+1}}_{H^{\alpha}}\Big)\leq\sum_{m=1}^{n}\Big(2h
M_0L\bar{C}_1h^{2r+1}+\bar{C}_2h^{2r+1}\Big)\\
& \leq 2 M_0L\bar{C}_1Th^{2r+1}+\bar{C}_2Th^{2r},
\end{aligned}
\end{equation*}
and  using  Gronwall's lemma, we obtain
\begin{equation*}
\begin{aligned}
\norm{e_n}_{H^{\alpha}}\leq \exp(2TM_0L)\big( 2 M_0L\bar{C}_1T h
+\bar{C}_2T\big)h^{2r}.
\end{aligned}
\end{equation*}
 \hfill  $\square$
\end{proof}

\begin{rem}
From this convergence result, it follows that our exponential
collocation methods can be of arbitrarily high order   by only
choosing a suitable large integer
  $r$, which is  very simple and convenient  in applications.   This feature is significant in the construction
of higher-order   methods.
\end{rem}

\section{Nonlinear
stability}\label{sec:stability}
 This section is devoted to the study
of nonlinear stability. To this end, we consider the following
perturbed problem associated with \eqref{sch ode}
\begin{equation}\label{perturb problem}
\begin{aligned}
&\check{u}_{t}=\mathrm{i}\mathcal{A}\check{u}+\mathrm{i}f(\check{u}),
\ \ \ \ \check{u}(0,x)=u_0(x)+\check{u}_0(x),
\end{aligned}
\end{equation}
where $\check{u}_0(x)\in H^{\alpha}$ is perturbation function.
Letting $ \hat{u}(t,x)=\check{u}(t,x)-u(t,x),$ and subtracting
\eqref{sch ode} from \eqref{perturb problem} yields
\begin{equation}\label{perturb}
\begin{aligned}
\hat{u}_{t}=\mathrm{i}\mathcal{A}\hat{u}+\mathrm{i}f(\check{u})-\mathrm{i}f(u),
\ \ \  \hat u(x,0)=\check{u}_0(x).
\end{aligned}
\end{equation}
Applying   the approximation   respectively to \eqref{sch ode} and
\eqref{perturb problem}, we obtain two numerical schemes, which
leads to an approximation of \eqref{perturb} as follows
\begin{equation}\label{method for perturb}
\left\{\begin{aligned} & \hat  u (t_n+\tau h)=e^{\tau h \mathrm{i}
\mathcal{A}}\hat u_{n}+\tau h \int_{0}^1
\bar{A}_{\tau,\sigma}(\mathrm{i} \mathcal{A}) \mathrm{i}\Big[f(
 \check u (t_n+\tau h))  -f (u (t_n+\tau h))\Big] d\sigma,\\
&\hat u_{n+1}=e^{ h \mathrm{i} \mathcal{A}}\hat u_{n}+ h \int_{0}^1
\bar{A}_{1,\sigma}(\mathrm{i} \mathcal{A}) \mathrm{i}\Big[f(
 \check u (t_n+ h))  -f (u (t_n+ h))\Big] d\sigma,
\end{aligned}\right.
\end{equation}
where $ \hat u_{n+1}=\check u_{n+1}-u_{n+1}.$

\begin{theo}   \label{thm:nonlinear
stability}Under the conditions in Theorem \ref{thm2}, we have the
following nonlinear stability result
\begin{equation*}\label{stability}
\begin{aligned}
\norm{\hat u_n}_{H^{\alpha}}\leq \exp(2T
M_0L)\norm{\check{u}_0(x)}_{H^{\alpha}}.
\end{aligned}
\end{equation*}
\end{theo}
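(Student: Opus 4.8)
The plan is to mimic the convergence proof of Theorem~\ref{thm2}, but with the exact-solution defects $\triangle_{n,\tau}$ and $\delta_{n+1}$ replaced by zero, since the scheme \eqref{method for perturb} is \emph{exactly} the ECMr recursion applied to the difference $\hat u = \check u - u$ with no consistency error. First I would subtract, in \eqref{method for perturb}, nothing at all — the system is already in error-recursion form — and denote by $\hat E_{n,\tau} = \hat u(t_n+\tau h)$ the internal-stage difference. Using Assumption~\ref{assumption} (Lipschitz continuity of $f$ with constant $L$ on the relevant ball, which holds because $\check u$ and $u$ both stay in $B(\bar u_0,R)$ by Assumption~\ref{ass1} and the stepsize restriction), together with the uniform bound $M_0$ from \eqref{MC} and $\norm{e^{\tau h \mathrm{i}\mathcal{A}}}_{H^\alpha\to H^\alpha}=1$ from Proposition~\ref{prop1}, I get exactly the pair of inequalities
\begin{equation*}
\begin{aligned}
\norm{\hat E_{n,\tau}}_{H^{\alpha}}&\leq \norm{\hat u_n}_{H^{\alpha}}+hM_0L\norm{\hat E_{n,\tau}}_{H^{\alpha}},\\
\norm{\hat u_{n+1}}_{H^{\alpha}}&\leq \norm{\hat u_n}_{H^{\alpha}}+hM_0L\norm{\hat E_{n,\tau}}_{H^{\alpha}},
\end{aligned}
\end{equation*}
which are \eqref{err recu-1111} with the defect terms deleted.

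Next, I would invoke the stepsize condition $0<h<\frac{1}{2M_0L}$ from Theorem~\ref{thm2}, i.e. $1-hM_0L\geq\frac12$, to solve the first inequality for $\norm{\hat E_{n,\tau}}_{H^\alpha}\leq 2\norm{\hat u_n}_{H^\alpha}$, and substitute into the second to obtain the one-step recursion $\norm{\hat u_{n+1}}_{H^\alpha}\leq (1+2hM_0L)\norm{\hat u_n}_{H^\alpha}$. Iterating this from $n=0$ (or applying Lemma~\ref{Gronwall's lemma} with $b_k=0$, $\mu=2M_0L$, $\Delta t=h$, $a_0=\norm{\hat u_0}_{H^\alpha}=\norm{\check u_0(x)}_{H^\alpha}$) gives $\norm{\hat u_n}_{H^\alpha}\leq \exp(2M_0L\,nh)\norm{\check u_0(x)}_{H^\alpha}\leq \exp(2TM_0L)\norm{\check u_0(x)}_{H^\alpha}$, which is the claimed bound.

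The only genuine point requiring care — the ``main obstacle'' — is the a priori justification that the perturbed numerical solution $\check u$, and hence all its internal stages $\check u(t_n+\tau h)$, actually remain in the ball $B(\bar u_0,R)$ where the Lipschitz estimate of Assumption~\ref{assumption} is valid; this is needed before the constant $L$ may be used. I would dispatch this either by assuming $\norm{\check u_0(x)}_{H^\alpha}$ is small enough (a standard hypothesis in nonlinear stability statements, implicit in ``under the conditions in Theorem~\ref{thm2}''), together with the existence-and-uniqueness result Theorem~\ref{eus} applied to the perturbed initial data, or by noting that the bound being proved is itself uniform in $n$, so a continuation/bootstrap argument closes the loop. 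Everything else is a verbatim specialization of the convergence argument, so the proof is short; I would present it in two or three displayed steps and remark explicitly that it is the homogeneous (defect-free) analogue of the recursion in the proof of Theorem~\ref{thm2}.
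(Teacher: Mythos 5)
Your proposal is correct and follows essentially the same route as the paper's own proof: the defect-free error recursion from \eqref{method for perturb}, the stepsize condition $1-hM_0L\geq\frac12$ to bound the internal stage by $2\norm{\hat u_n}_{H^\alpha}$, and then the one-step estimate $(1+2hM_0L)$ iterated (or Gronwall with $b_k=0$) to reach $\exp(2TM_0L)\norm{\check u_0}_{H^\alpha}$. Your extra care about verifying that $\check u$ and its internal stages stay in $B(\bar u_0,R)$ before invoking the Lipschitz constant $L$ is a point the paper silently assumes, so it is a welcome refinement rather than a divergence.
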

\begin{proof}According to \eqref{method for perturb},  we have
\begin{equation*}\label{err recu-11}
\begin{aligned} \norm{\hat  u (t_n+\tau h)}_{H^{\alpha}}\leq&\norm{\hat u_{n}}_{H^{\alpha}}+  h
M_0L
 \norm{\hat  u (t_n+\tau h)}_{H^{\alpha}} , \\
\norm{\hat u_{n+1}}_{H^{\alpha}}\leq&\norm{\hat u_{n}}_{H^{\alpha}}+
h M_0L \norm{\hat  u (t_n+\tau h)}_{H^{\alpha}} .\\
\end{aligned}
\end{equation*}
From the first result, it follows that $ \norm{\hat  u (t_n+\tau
h)}_{H^{\alpha}}\leq2\norm{\hat u_{n}}_{H^{\alpha}}. $ Inserting
this into the second one leads to
\begin{equation*}
\begin{aligned}
\norm{\hat u_{n+1}}_{H^{\alpha}}\leq&(1+  2h M_0L)\norm{\hat u_{n}}_{H^{\alpha}}.\\
\end{aligned}
\end{equation*}
Therefore, we arrive at
\begin{equation*}
\begin{aligned}
\norm{\hat u_n}_{H^{\alpha}}\leq \exp(2T M_0L)\norm{\hat
u_0(x)}_{H^{\alpha}}=\exp(2T
M_0L)\norm{\check{u}_0(x)}_{H^{\alpha}}.
\end{aligned}
\end{equation*}
 \hfill  $\square$
\end{proof}

\section{Numerical experiments}\label{sec:experiments}
As an example, we choose
$Y=\text{span}^x\left\{\varphi_{0}(t),\varphi_{1}(t),\ldots,\varphi_{r-1}(t)\right\}$
and $
X=\text{span}^x\left\{1,\int_{0}^{t}\varphi_{0}(s)ds,\ldots,\int_{0}^{t}\varphi_{r-1}(s)ds\right\}
$ with $\varphi_{k}(t)=t^k$ for $k=0,1,\ldots,r-1.$   Applying the
$r$-point Gauss--Legendre's quadrature  to the  integral of
\eqref{ECM} yields
\begin{equation}\label{PCRK}
\left\{\begin{aligned} &y_{c_{k}}=e^{c_{k} h \mathrm{i}
\mathcal{A}}y_{0}(x)+c_{k}h \sum_{l=1}^{r}b_l
\bar{A}_{c_{k},c_{l}}(\mathrm{i} \mathcal{A}) \mathrm{i}f(y_{c_{l}}),\quad k=1,2,\ldots,r,\\
&y_{1}(x)=e^{  h \mathrm{i} \mathcal{A}}y_{0}(x)+ h
\sum_{l=1}^{r}b_l \bar{A}_{1,c_{l}}(\mathrm{i} \mathcal{A})
\mathrm{i}f(y_{c_{l}}),
\end{aligned}\right.
\end{equation}
where   $y_{c_{k}}:=\tilde{u}(c_{k},x)$ and
   $c_{l}, b_{l}$ with $k=1,2,\ldots,r$ are the   nodes and weights of the quadrature, respectively.
 We choose $r=2$ and $r=3$ and then denote these two methods by ECM2 and ECM3, respectively.

 In order to  show the efficiency and robustness of these two exponential collocation methods, we
choose another three methods appeared in the literature:
\begin{itemize}
\item  EPC: the energy-preserving collocation fourth order  method  given in
\cite{Hairer2010}, which is precisely the ``extended Labatto IIIA
method of order four" in \cite{Iavernaro2009} if the integral is
approximated  by the Lobatto quadrature of order eight;

\item  EEI:  the explicit exponential integrator of order four derived in
\cite{Hochbruck2009};

\item  EAVF: the energy-preserving exponential AVF method derived in \cite{Li_Wu(sci2016)}.

\end{itemize}

It is  noted  that for all the exponential-type integrators, we use
the Pade approximations to compute  the $\bar{\varphi}$-functions
(see \cite{Berland07soft} for more details). For implicit methods,
we set $10^{-16}$ as the error tolerance and $5$ as the maximum
number of each fixed-point iteration. We also remark that in order
to show that our methods can perform well even for few iterations, a
low maximum number of fixed-point iterations is used.

\vskip2mm\noindent\textbf{Test one.} We first apply these methods to
the nonlinear Schr\"{o}dinger equation \eqref{sch system} with
$\lambda=-2$, $u_0(x)= 0.5 + 0.025 \cos(\mu x)$ and the periodic
boundary condition $\psi(0,t)=\psi(L,t)$. Following \cite{Chen2001},
we consider $L =4\sqrt{2}\pi$, $\mu = 2\pi/L$ and the pseudospectral
method with 128 points. This problem is solved with $T=10,20$ and
 $h= 0.1/2^{i}$ for  $i=2,\ldots,5$. The global errors are presented in Figure \ref{p1}. We also integrate this problem  in  $[0, 100]$
 with $h=1/100$ and $h=1/200$. The  conservation of  discretized energy  is shown in
Figure \ref{p2}. It is noted that when the results are too large for
some methods, we do not plot the corresponding points in the figure.

\vskip2mm\noindent\textbf{Test two.} We then  consider the nonlinear
Schr\"{o}dinger equation \eqref{sch system} with $\lambda=-1$,
$u_0(x)= \frac{1}{1+\sin(x)^2}$ and the same periodic boundary
condition (it has been considered in \cite{Celledoni08}). The global
errors of the intervals $[0, 10]$ and  $[0, 20]$ with the stepsizes
$h= 0.1/2^{i}$ for $i=2,\ldots,5$ are displayed in Figure \ref{p3}.
Figure \ref{p4} indicates the conservation of  discretised energy in
$[0, 100]$ with $h=1/100$ and $h=1/200$ for different methods.

It can be observed from these numerical results that our
 new methods   show a higher accuracy and  a better
 numerical energy-preserving property than the other three methods.

 \begin{figure}[ptb]
\centering
\includegraphics[width=4cm,height=4cm]{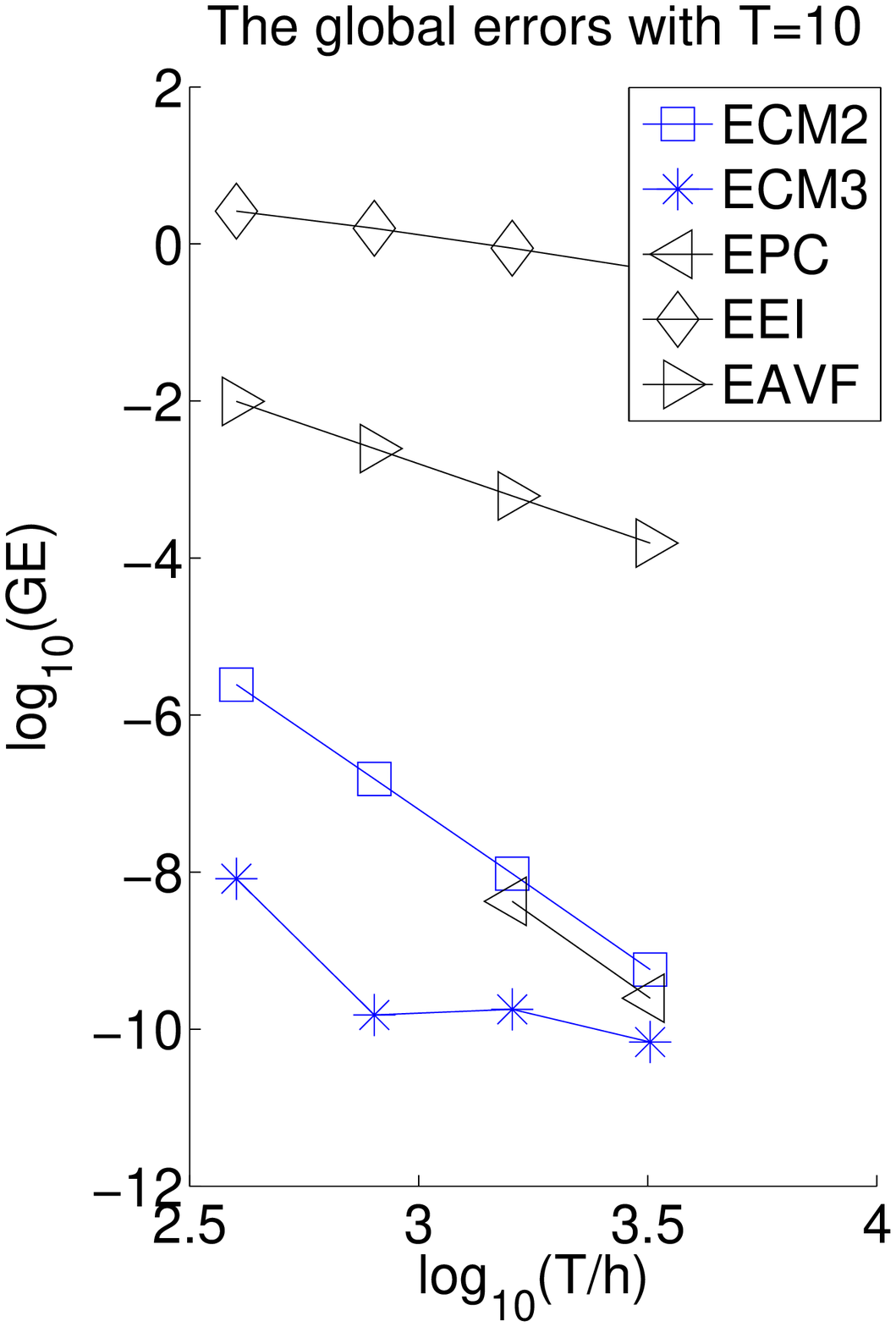}
\includegraphics[width=4cm,height=4cm]{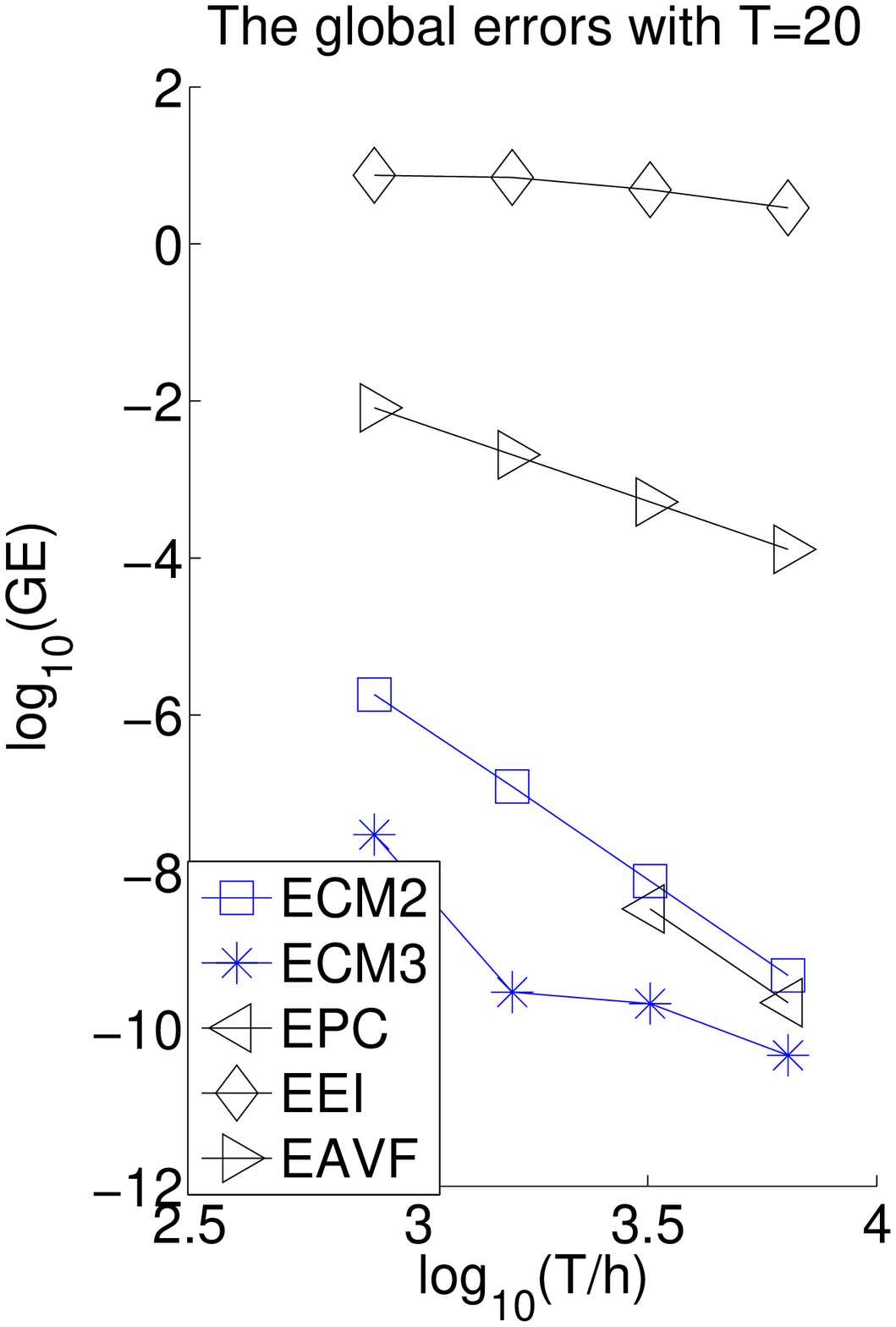}
\caption{The logarithm of the  global error against the logarithm of
$T/h$.} \label{p1}
\end{figure}

 \begin{figure}[ptb]
\centering
\includegraphics[width=8cm,height=3cm]{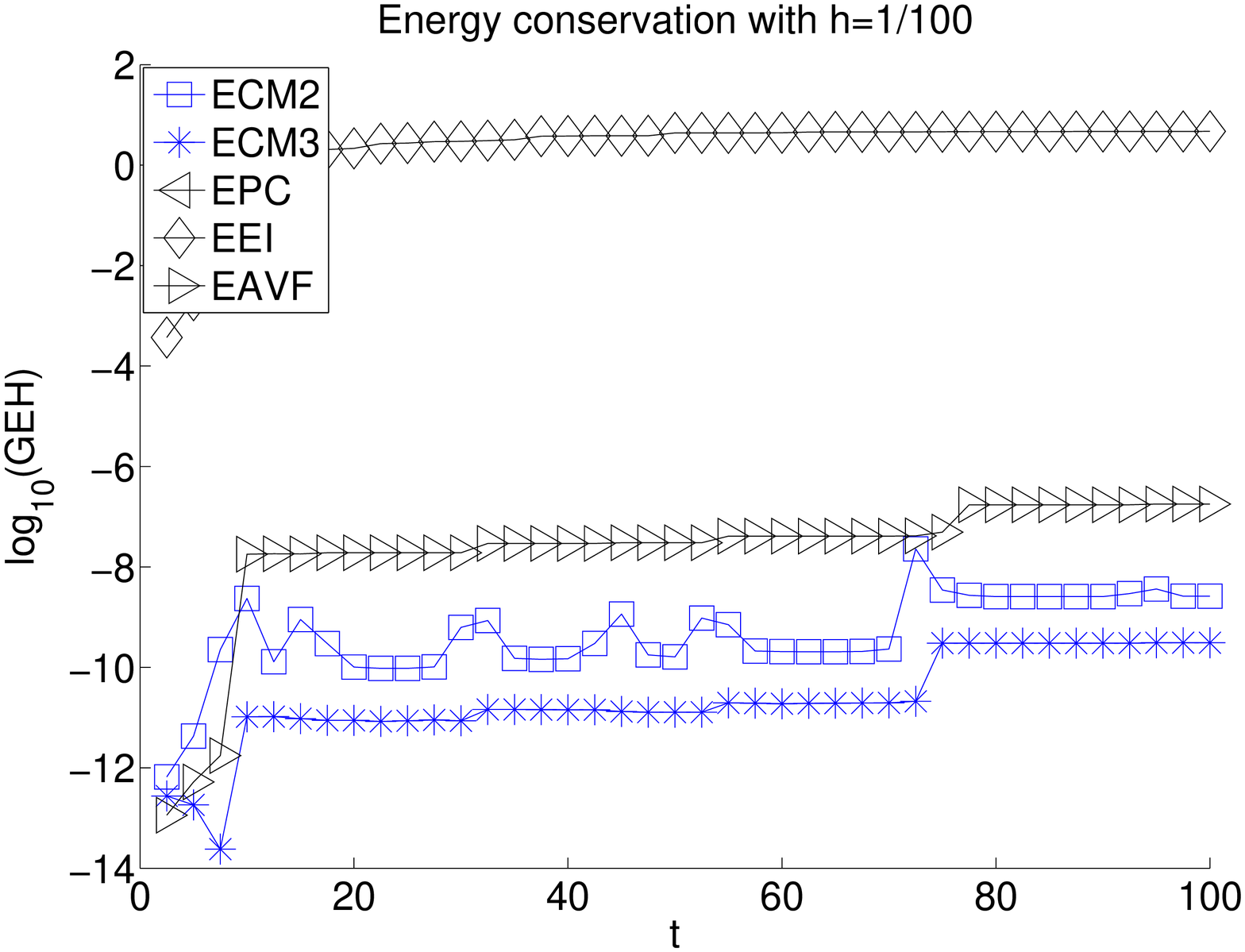}\\
\includegraphics[width=8cm,height=3cm]{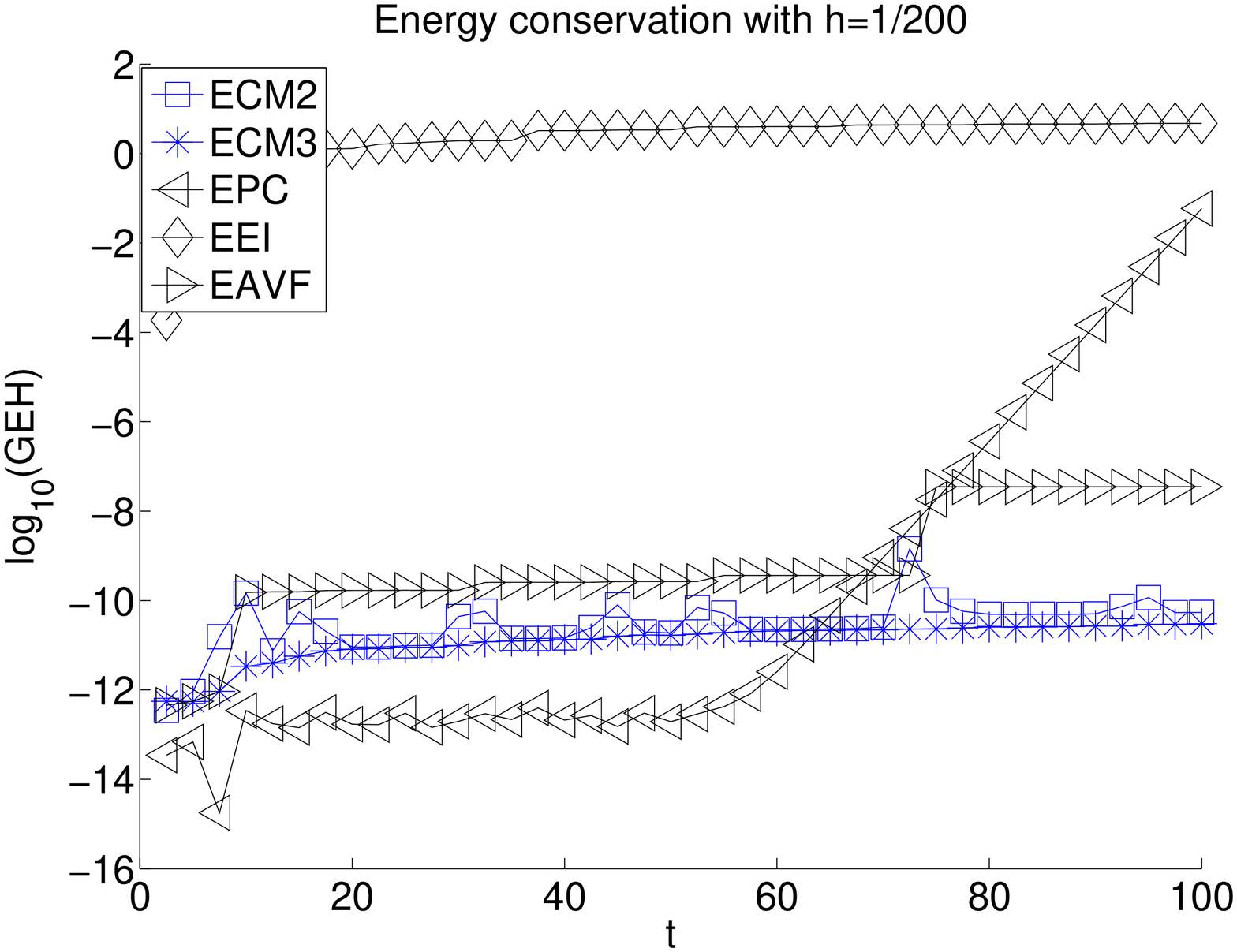}
\caption{The logarithm of the  error of Hamiltonian against  $t$.}
\label{p2}
\end{figure}

 \begin{figure}[ptb]
\centering
\includegraphics[width=4cm,height=4cm]{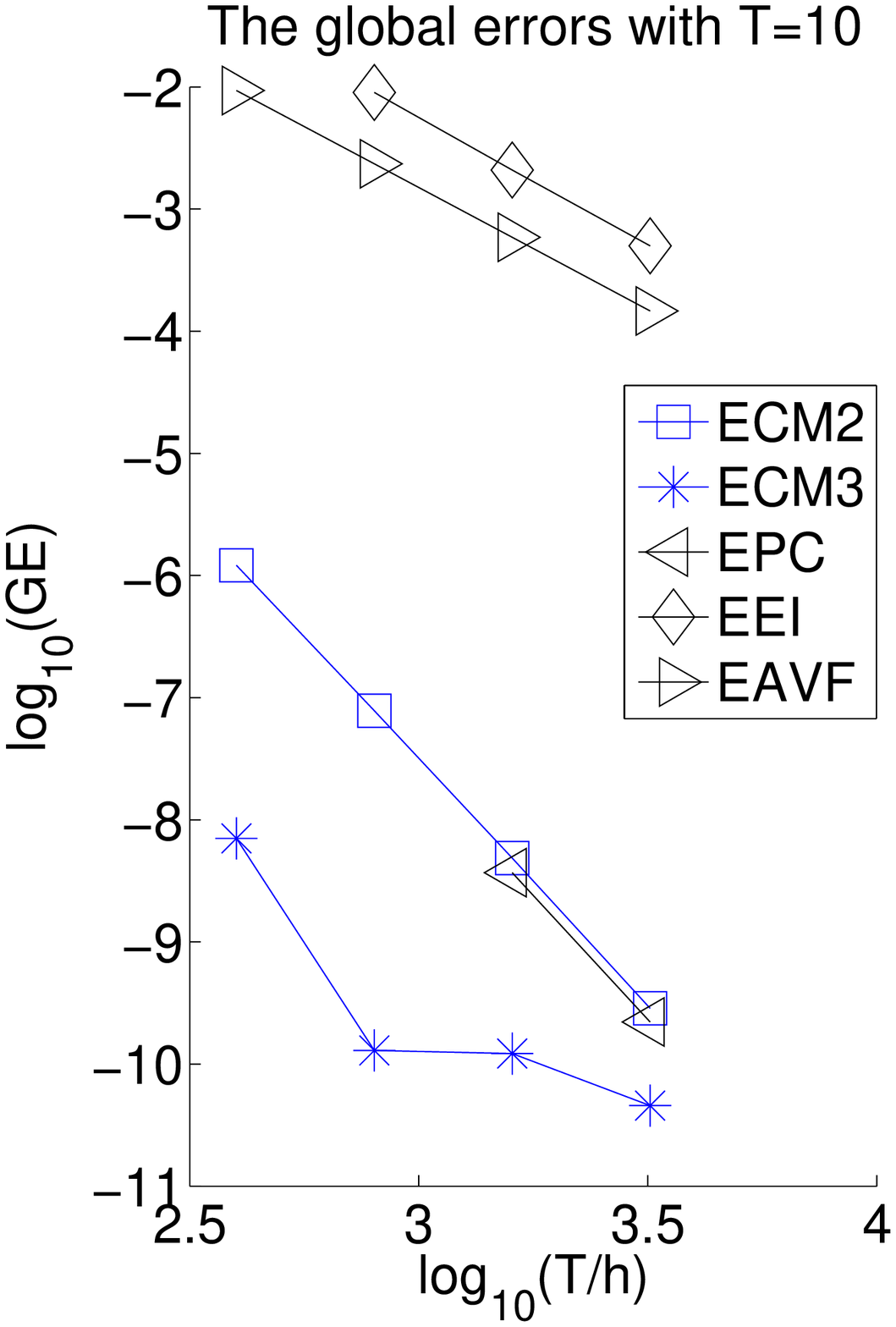}
\includegraphics[width=4cm,height=4cm]{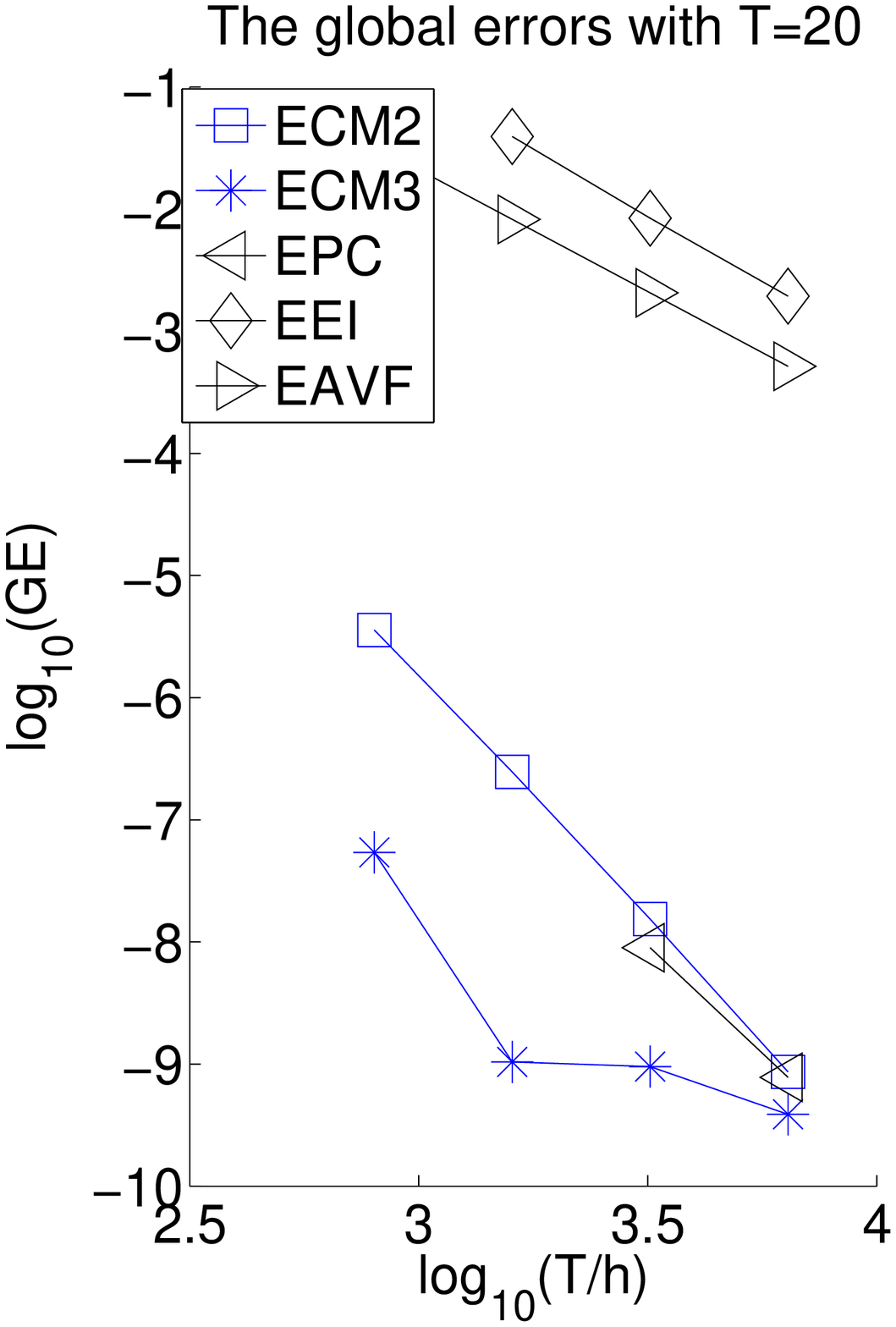}
\caption{The logarithm of the  global error against the logarithm of
$T/h$.} \label{p3}
\end{figure}

 \begin{figure}[ptb]
\centering
\includegraphics[width=8cm,height=3cm]{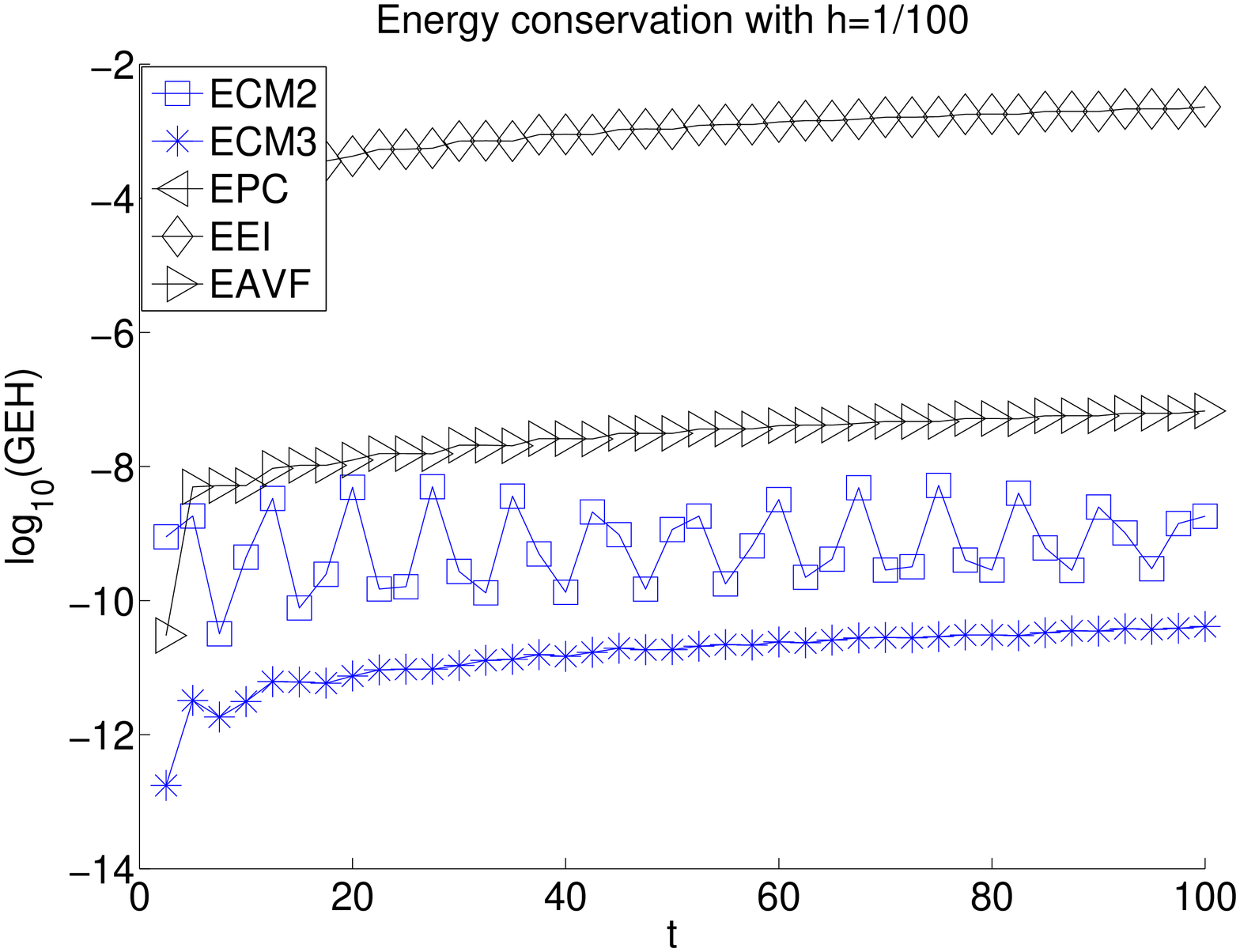}\\
\includegraphics[width=8cm,height=3cm]{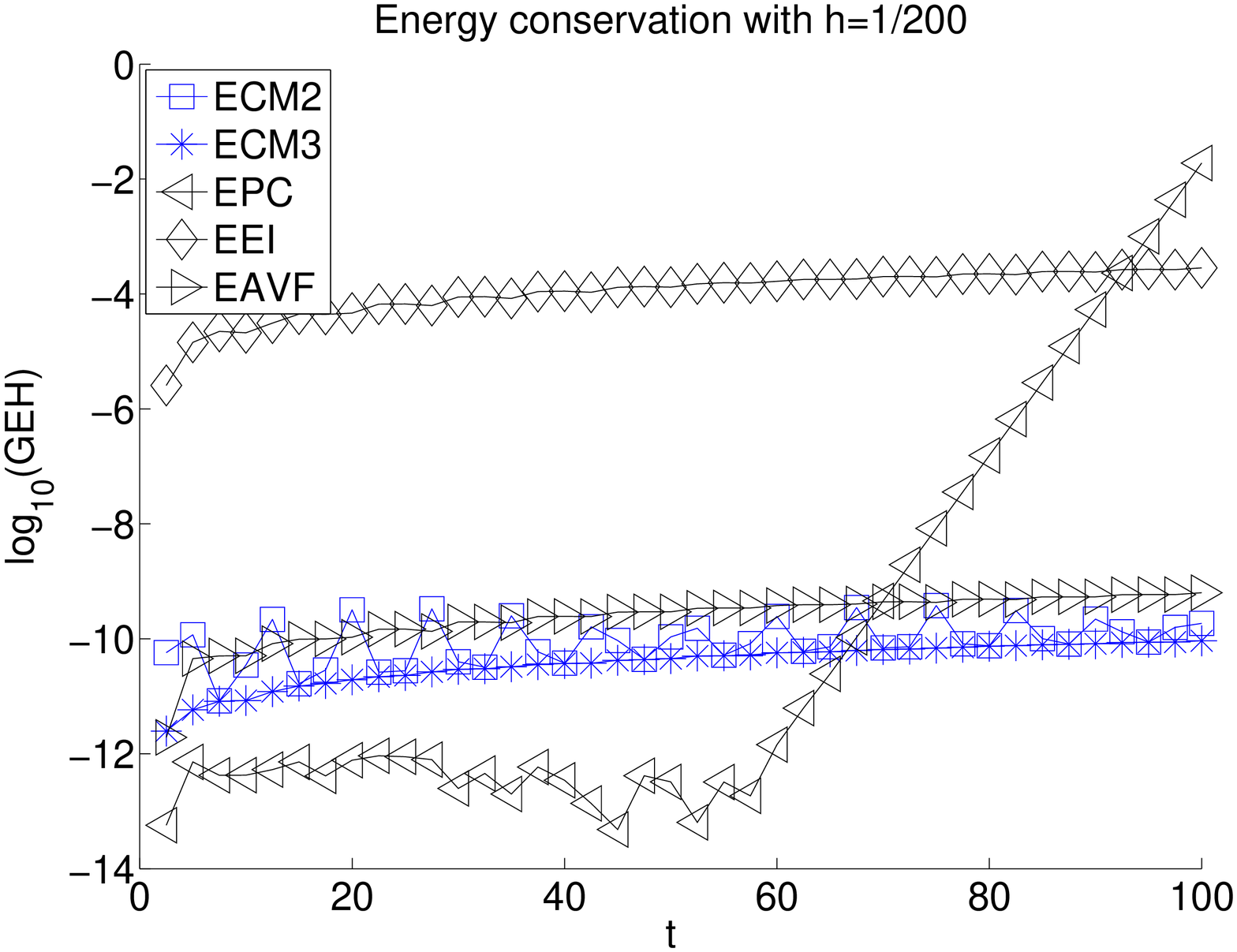}
\caption{The logarithm of the  error of Hamiltonian against  $t$.}
\label{p4}
\end{figure}

\section{Conclusions} \label{sec:conclusions}
We have presented and analysed  exponential collocation methods for
the cubic Schr\"{o}dinge Cauchy problem, which exactly or nearly
preserve   the continuous energy of the original system and can be
of arbitrarily high order. We have also analysed in detail
 the  properties of the  new  methods including existence and uniqueness, global convergence, and nonlinear stability.
Furthermore, the remarkable  efficiency of the    methods  was
demonstrated  by the numerical experiments in comparison with
 existing numerical schemes appeared in the literature.
The application  of the  methodology for the cubic Schr\"{o}dinger
equation \eqref{sch system} stated in this paper to other
Hamiltonian PDEs will be our future work in the near future.

\end{document}